\providecommand{\U}[1]{\protect\rule{.1in}{.1in}}
\newtheorem{theorem}{Theorem}[section]
\newtheorem{lemma}[theorem]{Lemma}
\newtheorem{proposition}[theorem]{Proposition}
\newtheorem{remark}[theorem]{Remark}
\theoremstyle{definition}
\newtheorem{definition}[theorem]{Definition}
\theoremstyle{remark}
\newtheorem{example}[theorem]{Example}
\numberwithin{equation}{section}
\DeclareMathOperator{\sech}{sech}
\begin{document}

\title{Representation of solutions to the one-dimensional Schrödinger equation in terms of Neumann series of Bessel functions}
\author{Vladislav V. Kravchenko$^{1}$, Luis J. Navarro$^{2}$, Sergii M. Torba$^{1}$\\$^{1}${\footnotesize Department of Mathematics, CINVESTAV del IPN, Unidad
Quer\'{e}taro }\\{\footnotesize Libramiento Norponiente \# 2000 Fracc. Real de Juriquilla
\ Quer\'{e}taro, Qro., CP 76230, M\'{e}xico}\\$^{2}${\footnotesize Department of Pure and Applied Mathematics, Simon Bolivar
University, Caracas 1080-A, Venezuela}\\{\footnotesize vkravchenko@math.cinvestav.edu.mx}
\\{\footnotesize ljnavarro@usb.ve}
\\{\footnotesize storba@math.cinvestav.edu.mx}}
\maketitle

\begin{abstract}
A new representation of solutions to the equation $-y^{\prime\prime
}+q(x)y=\omega^{2}y$ is obtained. For every $x$ the solution is represented as
a Neumann series of Bessel functions depending on the spectral parameter
$\omega$. Due to the fact that the representation is obtained using the
corresponding transmutation operator, a partial sum of the series approximates
the solution uniformly with respect to $\omega$ which makes it especially
convenient for the approximate solution of spectral problems. The numerical
method based on the proposed approach allows one to compute large sets of
eigendata with a nondeteriorating accuracy.

\end{abstract}

\footnotetext[3]{Research was supported by CONACYT via the projects 166141 and
222478, Mexico.
\par
During this work Luis Navarro was with the CINVESTAV supported by the
Secretary of Foreign Affairs of Mexico postdoctoral fellowship.}

\section{Introduction}

We consider the equation%
\begin{equation}
-y^{\prime\prime}+q(x)y=\omega^{2}y \label{SL_omega}%
\end{equation}
on a finite interval $(0,b)$. We assume $q$ being a continuous complex valued
function of an independent real variable $x\in\lbrack 0,b]$ and $\omega$ an
arbitrary complex number.

The main result of the paper is a new representation for solutions of
(\ref{SL_omega}) and for their derivatives in the form of Neumann series of
Bessel functions with explicit formulas for the coefficients. We obtain that
two linearly independent solutions of (\ref{SL_omega}) have the form%

\begin{equation}
c(\omega,x)=\cos\omega x+2\sum_{n=0}^{\infty}(-1)^{n}\beta_{2n}(x)j_{2n}%
(\omega x)\label{Intro c}%
\end{equation}
and
\begin{equation}
s(\omega,x)=\sin\omega x+2\sum_{n=0}^{\infty}(-1)^{n}\beta_{2n+1}%
(x)j_{2n+1}(\omega x)\label{Intro s}%
\end{equation}
where $j_{k}$ stands for the spherical Bessel function of order $k$ and the
functions $\beta_{k}$ are calculated following a relatively simple recursive
integration procedure. The series are uniformly convergent both with respect
to $x$ and to $\omega$. The representations are obtained with the aid of the
transmutation (transformation) operators related with (\ref{SL_omega}) (for
the theory of such operators we refer to \cite{BegehrGilbert}, \cite{Carroll},
\cite{LevitanInverse}, \cite{Marchenko}, \cite{Sitnik}, \cite{Trimeche}). Due
to this fact and since the kernel of a transmutation operator, realized in the
form of a Volterra integral operator, is independent of the spectral
parameter, it is not difficult to prove that the partial sums of the series
obtained approximate the solutions uniformly with respect to $\omega$ (Theorem
\ref{Th Representation of solutions via Bessel} and Remark
\ref{Rem Estimates for derivatives}). This makes them especially attractive
for approximate solving of spectral problems.

The possibility to dispose of explicit formulas (\ref{beta direct definition})
for the coefficients $\beta_{k}$ in the series comes from a mapping property
of the transmutation operators discovered in \cite{CKT}, see also \cite{KT
Transmut} and \cite{KT AnalyticApprox}. Due to that property, in spite of not
knowing the kernel of the transmutation operator, one can however construct
the set of images under the action of the transmutation operator of all
nonnegative integer powers of the independent variable $x$. Using this we
obtain a Fourier-Legendre expansion of the transmutation kernel and use it to
write down the new representations for solutions of (\ref{SL_omega}).
Estimates for the rate of convergence of the Fourier-Legendre series are
obtained in dependence on the smoothness of $q$.

The efficiency and the uniform accuracy of the solution representations is
illustrated by some numerical examples which show that in several seconds one
can compute hundreds or if necessary thousands of eigendata with essentially
the same and small enough absolute error.

The one-dimensional Schr\"{o}dinger equation (\ref{SL_omega}) is, of course,
one of the most fundamental and classical objects of study in the theory of
differentials equations and mathematical physics. Its applications are
uncountable. Any new result for this equation and especially a new
representation of its solutions can lead to unforeseen applications. Since the
new representation possesses such a unique and, in fact, amazing feature of
uniformity with respect to the spectral parameter $\omega$, its clear and
immediate application is to approximate solution of spectral and scattering
problems. We explore this use of our main result and show that without using
any elaborate numerical technique and simply programming the analytical
formulas obtained in the present work one can compute huge amounts of
eigendata with a uniform accuracy guaranteed, very fast, and in general for
complex valued coefficients. At present no other algorithm offers similar
possibilities, and we emphasize that a new numerical method is only one
possible application of the representation obtained. We expect that the new
representation will be used for obtaining asymptotic relations and solving
inverse problems. Moreover, a crucial role is played by the transmutation
operator. This is another fundamental object of the theory of differential
equations and especially of the theory of inverse spectral and scattering
problems. Its basic properties are well understood, but the difficulties with
construction of its integral kernel have always restricted its practical use.
In this relation we mention the paper \cite{Boumenir2006} where analytic
approximation formulas for the integral kernel were obtained and the recent
publications \cite{KT Transmut}, \cite{KT AnalyticApprox} where another
procedure of analytical approximation was proposed. To the difference of those
previous results, in the present work we obtain an exact formula for the
transmutation operator, its kernel is represented in the form of a
Fourier-Legendre series with explicit formulas for the coefficients. The use
of the transmutation operator is not limited to Sturm-Liouville equations. In
particular, it is applied to relate partial differential equations (see, e.g.,
\cite{BegehrGilbert}) and hence to solve problems involving PDEs with variable
coefficients. The representation of the transmutation kernel proposed here has
a convenient structure for this sort of applications. For example, it may be
used to construct complete systems of solutions of PDEs related via the
transmutation to those with known complete systems of solutions (see, e.g.,
\cite{Colton1976}).

Finally, let us notice that the Neumann series of Bessel functions represent
another classical notion of mathematical analysis. They were first studied by
the German mathematician
Carl Gottfried Neumann in 1867 and are named after him. The theory
of Neumann series was developed later by
L. B. Gegenbauer in 1877. We refer to another important paper on this subject
\cite{Wilkins}. For more recent results we refer to \cite{Baricz2012},
\cite{Pogany2009} and references therein. In an interesting research reported
in \cite{Chebli1994} and \cite{Fitouhi1990} there appears a representation of
solutions of Sturm-Liouville equations in the form of Neumann series of Bessel
functions different to the representation obtained in the present work. The
representation from \cite{Chebli1994} and \cite{Fitouhi1990} does not possess
the uniformity with respect to $\omega$ to the difference from our
representation, and the convergence of the series which is guaranteed on a
certain interval of $x$ for holomorphic $q$ only is achieved due to the
exponential decay of $j_{n}(z)$ when $n\rightarrow\infty$. Apart from that
previous work, to our best knowledge, the Neumann series of Bessel functions
have not been used to represent solutions of a general linear differential
equation. The attractive features of the representation presented here
indicate that the Neumann series of Bessel functions should be considered as a
natural and important object of study in the theory of linear differential equations.

The paper is structured as follows. In Section
\ref{Sect Transmutations and Powers} we introduce some necessary notations,
definitions and properties concerning special systems of functions related to
(\ref{SL_omega}) and called formal powers, as well as the transmutation
operators. In Section \ref{Sect Fourier-Legendre for the Kernel} we show that
the kernel of a transmutation operator admits the representation $K(x,t)=\sum_{k=0}^{\infty}
\frac{\beta_{k}(x)}{x}P_{k}\left(  \frac{t}{x}\right)  $ where $P_{k}$ are
Legendre polynomials and the coefficients $\beta_{k}$ are defined with the aid
of the formal powers. We prove a direct and an inverse results on the rate of
convergence of the series in dependence on the smoothness of $q$. In Section
\ref{Sect Representation Solutions} the Fourier-Legendre expansion of the
transmutation kernel is used to obtain the main result of this work, the
representations (\ref{Intro c}) and (\ref{Intro s}), and to prove that the
partial sums of these series give us a uniform approximation of the solutions
with respect to the spectral parameter $\omega$. In Section
\ref{Sect Representation Derivatives} we obtain analogous results for the
derivatives of the solutions $c(\omega,x)$ and $s(\omega,x)$. Again we prove
the uniform approximation with respect to $\omega$. Since the coefficients
$\beta_{k}$ (and their counterparts $\gamma_{k}$ appearing in the
representations of the derivatives) are the main ingredient of the
representations which depends on $q$, besides their direct definition in terms
of formal powers (\ref{beta direct definition}) it is desirable to dispose of
a most efficient and stable  procedure for their numerical computation. In
Section \ref{Sect Sequence Equations} we propose\ one such procedure which
proved to work numerically much better than (\ref{beta direct definition}) and
converted the representations (\ref{Intro c}) and (\ref{Intro s}) into a
powerful numerical method for solving initial value and spectral problems for
(\ref{SL_omega}). In Section \ref{Sect Num Exp} we confirm this affirmation
with several numerical experiments.

\section{Transmutations and formal
powers\label{Sect Transmutations and Powers}}

The definition of the transmutation operator as given in this paper requires the potential $q$ to be defined on the symmetric interval $[-b,b]$ (see \cite{Marchenko}, \cite{KT Transmut}). However as we explain later in this section, the results of the present work do not depend on the continuation of the potential onto negative values of $x$. For that reason throughout this section we assume that equation \eqref{SL_omega} is defined on the symmetric segment $[-b,b]$ and that the potential $q$ is continuous on this segment and in the rest of the paper only the segment $[0,b]$ is considered.

Throughout the paper we suppose that $f$ is a non-vanishing solution (in general, complex-valued) of the
equation
\begin{equation}
f^{\prime\prime}-qf=0\label{SLhom}%
\end{equation}
on $[0,b]$ ($[-b,b]$ for this section) such that
\[
f(0)=1.
\]
The existence of such
solution\footnote{In fact the only reason for the requirement of the absence
of zeros of the function $f$ is to make sure that the auxiliary functions
(\ref{phik}) and (\ref{psik}) be well defined. As was shown in
\cite{KrTNewSPPS} this can be done even without such requirement, but
corresponding formulas are relatively more complicated.} for any
complex-valued $q\in C[-b,b]$ was proved in \cite[Remark 5]{KrPorter2010} (see
also \cite{Camporesi et al 2011}). Denote $h:=f^{\prime}(0)$.

Consider two sequences of recursive integrals (see \cite{KrCV08}, \cite{KMoT},
\cite{KrPorter2010})
\begin{equation*}
X^{(0)}(x)\equiv1,\qquad X^{(n)}(x)=n\int_{0}^{x}X^{(n-1)}(s)\left(
f^{2}(s)\right)  ^{(-1)^{n}}\,\mathrm{d}s,\qquad n=1,2,\ldots\label{Xn}%
\end{equation*}
and
\begin{equation*}
\widetilde{X}^{(0)}\equiv1,\qquad\widetilde{X}^{(n)}(x)=n\int_{0}%
^{x}\widetilde{X}^{(n-1)}(s)\left(  f^{2}(s)\right)  ^{(-1)^{n-1}}%
\,\mathrm{d}s,\qquad n=1,2,\ldots. \label{Xtiln}%
\end{equation*}

\begin{definition}
\label{Def Formal powers phik and psik}The families of functions $\left\{
\varphi_{k}\right\}  _{k=0}^{\infty}$ and $\left\{  \psi_{k}\right\}
_{k=0}^{\infty}$ constructed according to the rules
\begin{equation}
\varphi_{k}(x)=%
\begin{cases}
f(x)X^{(k)}(x), & k\text{\ odd},\\
f(x)\widetilde{X}^{(k)}(x), & k\text{\ even}%
\end{cases}
\label{phik}%
\end{equation}
and
\begin{equation}
\psi_{k}(x)=%
\begin{cases}
\dfrac{\widetilde{X}^{(k)}(x)}{f(x)}, & k\text{\ odd,}\\
\dfrac{X^{(k)}(x)}{f(x)}, & k\text{\ even}.
\end{cases}
\label{psik}%
\end{equation}
are called the systems of formal powers associated with $f$.
\end{definition}

\begin{remark}
The formal powers arise in the spectral parameter power series (SPPS)
representation for solutions of \eqref{SL_omega} (see \cite{KKRosu},
\cite{KrCV08}, \cite{KMoT}, \cite{KrPorter2010}).
\end{remark}

\begin{theorem}
\label{Th Transmutation} Let $q\in C[-b,b]$. Then there exists a unique
complex valued function $K(x,t)\in C^{1}([-b,b]\times\lbrack-b,b])$ such that
the Volterra integral operator
\begin{equation}
Tu(x)=u(x)+\int_{-x}^{x}K(x,t)u(t)dt\label{Tmain}%
\end{equation}
defined on $C[-b,b]$ satisfies the equality
\[
\left(  -\frac{d^{2}}{dx^{2}}+q(x)\right)  T[u]=T\left[  -\frac{d^{2}}{dx^{2}%
}(u)\right]
\]
for any $u\in C^{2}[-b,b]$ and
\[
T[1]=f\text{.}%
\]

\end{theorem}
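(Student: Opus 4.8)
The plan is to establish the existence of the transmutation kernel $K(x,t)$ by reducing the operator equation to a partial differential equation with Goursat (characteristic) boundary data, a construction classical in the theory of Sturm--Liouville transmutations (see \cite{Marchenko}, \cite{LevitanInverse}). First I would substitute the ansatz \eqref{Tmain} into the intertwining relation $\left(-\frac{d^{2}}{dx^{2}}+q(x)\right)T[u]=T\left[-\frac{d^{2}}{dx^{2}}u\right]$ and integrate by parts twice on the right-hand side, treating $\int_{-x}^{x}K(x,t)u''(t)\,dt$. The differentiations of the $x$-dependent limits produce boundary contributions at $t=\pm x$ involving $K(x,\pm x)$ and its derivatives, while the double integration by parts transfers the two $t$-derivatives from $u$ onto $K$. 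Collecting all terms and using that $u$ is an arbitrary $C^{2}$ function, I would read off that $K$ must satisfy the hyperbolic equation
\begin{equation}
K_{xx}(x,t)-q(x)K(x,t)=K_{tt}(x,t)\label{KGoursatPDE}
\end{equation}
together with boundary conditions on the characteristics $t=\pm x$.

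Next I would pin down those characteristic conditions. The requirement $T[1]=f$, combined with the normalizations $f(0)=1$ and $h=f'(0)$, forces a relation along $t=x$; differentiating $\frac{d}{dx}K(x,x)$ and matching against the coefficient of $u'$ (which must vanish because the intertwining relation carries no first-order term) yields the standard condition $\frac{d}{dx}K(x,x)=\frac{1}{2}q(x)$, hence $K(x,x)=\frac{1}{2}\int_{0}^{x}q(s)\,ds=\frac{h}{2}+\frac{1}{2}\int_{0}^{x}q(s)\,ds$ after fixing the constant from $f$. Symmetry considerations on the opposite characteristic $t=-x$ give a second condition; typically one obtains $K(x,-x)=\frac{h}{2}$ (or an analogous constant determined by $f'(0)$), which encodes the value $h=f'(0)$ in the construction. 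The plan is to verify that these two characteristic conditions are exactly equivalent to the pair of requirements in the statement, namely the operator identity and $T[1]=f$.

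With the Goursat problem \eqref{KGoursatPDE} and its characteristic data in hand, I would change to the characteristic coordinates $\xi=x+t$, $\eta=x-t$, under which \eqref{KGoursatPDE} becomes a Goursat problem for a canonical hyperbolic operator of the form $K_{\xi\eta}=\tfrac14\,q\!\left(\tfrac{\xi+\eta}{2}\right)K$. Integrating twice converts this into an equivalent Volterra-type integral equation for $K$ over the characteristic triangle, whose data are supplied by the conditions on $t=\pm x$. I would then solve this integral equation by the method of successive approximations (Picard iteration): the iterates are controlled on the compact triangle by $\max|q|$, and the resulting series converges absolutely and uniformly, producing a continuous solution $K$. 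Because $q\in C[-b,b]$, one differentiation of the integral representation shows $K\in C^{1}$, matching the claimed regularity; uniqueness follows from a Gr\"onwall estimate applied to the difference of two solutions of the same Volterra equation.

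The main obstacle I anticipate is the careful bookkeeping in the first step: because the integration limits in \eqref{Tmain} depend on $x$, differentiating under the integral sign and integrating by parts generate several boundary terms at $t=\pm x$, and it is precisely the demand that the spurious first-order and boundary terms cancel that produces the correct characteristic conditions. Getting the constants and signs right in those conditions---so that both the PDE \eqref{KGoursatPDE} and the normalization $T[1]=f$ are simultaneously satisfied, and so that the two conditions are not over- or under-determined---is the delicate part; once the Goursat problem is correctly posed, the existence, uniqueness and $C^{1}$-regularity follow by standard successive-approximation arguments.
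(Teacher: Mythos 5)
Your proposal is correct and coincides with the argument the paper itself relies on: the paper offers no self-contained proof of Theorem \ref{Th Transmutation}, referring instead to Marchenko's Theorem 3.1.1, whose proof is precisely this reduction to a Goursat problem for $K_{xx}-q(x)K=K_{tt}$ with characteristic data $K(x,x)=\frac{h}{2}+\frac{1}{2}\int_{0}^{x}q(s)\,ds$ and $K(x,-x)=\frac{h}{2}$, followed by passage to characteristic coordinates and solution of the resulting Volterra-type integral equation by successive approximations. Indeed, the very integral equation and Goursat conditions you derive appear later in the paper as \eqref{IntEq for K} and \eqref{GoursatCond for K}, so apart from minor bookkeeping (the constant $h/2$ in $K(x,x)$ should be carried from the start, and the diagonal condition comes from the coefficient of $u(x)$ rather than of $u'$), your route is the intended one.
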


For the proof of this fact we refer to \cite[Theorem 3.1.1]{Marchenko52}. Slightly different proof and references to earlier publications are given in \cite{KT Transmut}.

$T$ maps any solution $v$ of the equation $v^{\prime\prime}+\omega^{2}v=0$
into a solution $y$ of equation (\ref{SL_omega}) with the following
correspondence of the initial values $y(0)=v(0)$, $y^{\prime}(0)=v^{\prime
}(0)+hv(0)$.

In particular, we introduce two linearly independent solutions of
(\ref{SL_omega}),%
\begin{equation}
c(\omega,x):=T\left[  \cos\omega x\right]  \quad\text{and}\quad s(\omega
,x):=T\left[  \sin\omega x\right]  . \label{c and s definition}%
\end{equation}

Note that the definition of the transmutation operator \eqref{Tmain} requires
knowledge of the integral kernel $K$ only in the regions $R_{1}:=\{0\leq x\leq
b,\ |t|\leq x\}$ and $R_{2}:=\{-b\leq x\leq0,\ |t|\leq|x|\}$. Moreover, these
two regions are independent in the following sense. The integral kernel $K$ in
$R_{1}$ depends on the values of the potential $q$ only on $[0,b]$ and does
not depend on values for $x<0$ (see \eqref{IntEq for K}), and in $R_{2}$
depends on the values on $[-b,0]$ and does not depend on the values for $x>0$. The value of  $T[u](x)$ for $x\geq0$ does not require the knowledge of $K$ on $R_{2}$. The same happens to the formal powers $\{\varphi_{k}\}$ and
$\{\psi_{k}\}$ whose values on $[0,b]$ are independent on the potential $q$ on $[-b,0)$. Therefore from now on we restrict the presentation to the segment $[0,b]$, all the results for the segment $[-b,0]$ are similar. One of the advantages of restricting the consideration to the segment $[0,b]$ consists in the knowledge of the initial values of the solutions $c(\omega,x)$ and $s(\omega,x)$ in the origin which is convenient for solving initial value and spectral problems on $[0,b]$.

The following mapping property plays a crucial role in what follows.

\begin{proposition}
[\cite{CKT}]\label{Prop Mapping property}
\begin{equation}
T\left[  x^{k}\right]  =\varphi_{k}(x)\qquad\text{for any}\ k\in\mathbb{N}%
\cup\left\{  0\right\}  . \label{mapping property}%
\end{equation}
\end{proposition}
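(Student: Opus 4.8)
The plan is to prove \eqref{mapping property} by induction on $k$, identifying $T[x^{k}]$ and $\varphi_{k}$ as solutions of one and the same initial value problem for the operator $L:=-\frac{d^{2}}{dx^{2}}+q$ and then invoking uniqueness. The two ingredients are the intertwining relation of Theorem \ref{Th Transmutation}, which converts the elementary differentiation rule $-(x^{k})''=-k(k-1)x^{k-2}$ into a recursion for $T[x^{k}]$, and a matching recursion satisfied by the formal powers themselves.

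The first step is to establish the purely algebraic identity
\begin{equation}
\varphi_{k}''-q\varphi_{k}=k(k-1)\varphi_{k-2},\qquad k\geq 2,
\label{phi recursion}
\end{equation}
together with the initial data $\varphi_{0}(0)=1$, $\varphi_{0}'(0)=h$, $\varphi_{1}(0)=0$, $\varphi_{1}'(0)=1$, and $\varphi_{k}(0)=\varphi_{k}'(0)=0$ for $k\geq 2$. Both follow by directly differentiating \eqref{phik} and using the defining relations of the recursive integrals $X^{(n)}$ and $\widetilde{X}^{(n)}$ together with $f''=qf$. Concretely, for odd $k$ one has $\varphi_{k}=fX^{(k)}$ with $(X^{(k)})'=kX^{(k-1)}f^{-2}$ and $(X^{(k-1)})'=(k-1)X^{(k-2)}f^{2}$; carrying out the two differentiations the cross terms cancel and the term $f''X^{(k)}=qfX^{(k)}=q\varphi_{k}$ is isolated, leaving precisely \eqref{phi recursion}. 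The even case is analogous after interchanging the roles of the two sequences. Rewriting \eqref{phi recursion} as $L\varphi_{k}=-k(k-1)\varphi_{k-2}$, one sees that the recursion links $k$ with $k-2$, so the induction splits naturally into the even chain starting at $k=0$ and the odd chain starting at $k=1$.

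Next I would compute the initial values of $T[x^{k}]$ directly from the Volterra representation \eqref{Tmain}. Evaluating at $x=0$ gives $T[u](0)=u(0)$, and differentiating under the integral sign with Leibniz' rule yields $(T[u])'(0)=u'(0)+2K(0,0)\,u(0)$. Applying this to $u\equiv 1$ and using $T[1]=f$ forces $2K(0,0)=f'(0)=h$, i.e.\ $K(0,0)=h/2$. Substituting $u=x^{k}$ then reproduces exactly the initial data listed above for $\varphi_{k}$, so the boundary conditions match for every $k$.

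The inductive step is now immediate. Since $x^{k}\in C^{2}$, the intertwining relation of Theorem \ref{Th Transmutation} and linearity of $T$ give
\[
LT[x^{k}]=T\bigl[-(x^{k})''\bigr]=-k(k-1)\,T[x^{k-2}]=-k(k-1)\,\varphi_{k-2}=L\varphi_{k},
\]
where the third equality uses the induction hypothesis $T[x^{k-2}]=\varphi_{k-2}$ and the last uses \eqref{phi recursion}. Thus $T[x^{k}]$ and $\varphi_{k}$ satisfy the same second order linear equation $Lw=-k(k-1)\varphi_{k-2}$ with continuous coefficient $q$ and the same values of $w(0)$ and $w'(0)$; by uniqueness of solutions of the initial value problem they coincide, which closes the induction. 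The base cases $k=0$ ($T[1]=f=\varphi_{0}$) and $k=1$ (both sides being the second solution $f\int_{0}^{x}f^{-2}$ of $Lw=0$ with $w(0)=0$, $w'(0)=1$) are verified directly. The only laborious part is the bookkeeping behind \eqref{phi recursion}, where the parity-dependent alternation of $(f^{2})^{(-1)^{n}}$ in the two integral sequences must be tracked carefully; everything else is a routine IVP-uniqueness argument.
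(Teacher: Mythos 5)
Your proof is correct, but note that this paper contains no internal proof to compare against: the proposition is quoted directly from \cite{CKT}. In that cited work the property is obtained essentially via the SPPS (spectral parameter power series) machinery: one applies $T$ termwise to the power series of $\cos\omega x$ and $\sin\omega x$, recognizes $T[\cos\omega x]$ and $T[\sin\omega x]$ as the solutions $c(\omega,x)$, $s(\omega,x)$ whose SPPS expansions have the formal powers $\varphi_{k}$ as coefficients, and equates coefficients of like powers of $\omega$ of two entire functions. Your route is genuinely different and more elementary. You verify by direct differentiation (with the parity bookkeeping of $X^{(n)}$ and $\widetilde{X}^{(n)}$ handled correctly in both chains) that $\varphi_{k}''-q\varphi_{k}=k(k-1)\varphi_{k-2}$ with $\varphi_{k}(0)=\varphi_{k}'(0)=0$ for $k\geq2$ and the obvious data for $k=0,1$; you extract the matching initial values of $T[x^{k}]$ from \eqref{Tmain}, deriving $K(0,0)=h/2$ from $T[1]=f$ (consistent with \eqref{GoursatCond for K}); and you close the induction in steps of two using the intertwining relation of Theorem \ref{Th Transmutation} together with uniqueness for the linear initial value problem, the sign conventions in $\bigl(-\frac{d^{2}}{dx^{2}}+q\bigr)T[x^{k}]=-k(k-1)T[x^{k-2}]$ being right. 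What your approach buys is self-containedness: it uses only objects already defined in this paper (the recursive integrals, Theorem \ref{Th Transmutation}, ODE uniqueness) and avoids invoking SPPS convergence results; what the cited approach buys is brevity once the SPPS representation is available, since the coefficient comparison is then immediate. One point worth making explicit in your write-up: the inductive step tacitly uses that $T[x^{k}]\in C^{2}$, which is implicit in the statement of Theorem \ref{Th Transmutation} (the intertwining identity is asserted for all $u\in C^{2}$), so nothing is actually missing.
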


Thus, even without knowing the transmutation kernel $K(x,t)$ it is possible to make use of the transmutation operator $T$ because the images of all nonnegative integer powers of $x$ can be calculated following Definition \ref{Def Formal powers phik and psik}. This result is used in the next section for obtaining an exact representation for the kernel $K(x,t)$ in the form of a Fourier-Legendre series (Theorem \ref{Th transmutation kernel via beta and P}).

\section{The Fourier-Legendre expansion of the transmutation
kernel\label{Sect Fourier-Legendre for the Kernel}}

Let $P_{n}$ denote the Legendre polynomial of order $n$, $l_{k,n}$ be the
corresponding coefficient of $x^{k}$, that is $P_{n}(x)=\sum_{k=0}^{n}%
l_{k,n}x^{k}$. Denote
\[
p_{j,k}:=\int_{-1}^{1}P_{j}\left(  y\right)  y^{k}dy,\qquad j,k\in
\mathbb{N}\cup\left\{  0\right\}  .
\]
Notice that $p_{j,k}=0$ when the parities of $j$ and $k$ do not coincide or
when $k<j$. For any $n\geq m$ and $\delta\in\{0,1\}$ (see, e.g., \cite{Prudnikov}),
\begin{equation*}
p_{2m+\delta,2n+\delta}  =\frac{\sqrt{\pi}\Gamma(2n+1+\delta)}{2^{2n+\delta}\Gamma(n-m+1)\Gamma
(\frac{3}{2}+n+m+\delta)}.
\end{equation*}

\begin{definition}
\label{Def beta}Let us introduce the following infinite system of functions
$\beta_{k}$, $k=0,1,\ldots$ defined recursively as follows
\[
\beta_{0}(x)=\frac{f(x)-1}{2}\text{,\qquad}\beta_{1}(x)=\frac{3}{2}\left(
\frac{\varphi_{1}(x)}{x}-1\right)  \text{,}%
\]
for any even $k>0$,%
\[
\beta_{k}(x)=\frac{1}{p_{kk}}\biggl(  \frac{\varphi_{k}(x)}{x^{k}}-1-
\sum_{\text{even }j=0}^{k-2} p_{jk}\beta_{j}(x)\biggr)
\]
and for any odd $k>1$,%
\[
\beta_{k}(x)=\frac{1}{p_{kk}}\biggl(  \frac{\varphi_{k}(x)}{x^{k}}-1-
\sum_{\text{odd }j=1}^{k-2} p_{jk}\beta_{j}(x)\biggr)  .
\]

\end{definition}

Below we show that the functions $\beta_{k}$ admit the following direct
definition as well%
\begin{equation}
\beta_{n}(x)=\frac{2n+1}{2}\biggl( \sum_{k=0}^{n}\frac{l_{k,n}\varphi_{k}%
(x)}{x^{k}}-1\biggr) . \label{beta direct definition}%
\end{equation}

\begin{theorem}
\label{Th transmutation kernel via beta and P}The transmutation kernel
$K(x,t)$ from Theorem \ref{Th Transmutation} has the form%
\begin{equation}
K(x,t)=%
{\displaystyle\sum\limits_{j=0}^{\infty}}
\frac{\beta_{j}(x)}{x}P_{j}\left(  \frac{t}{x}\right)  \label{K via beta}%
\end{equation}
where for every $x\in(0,b]$ the series converges uniformly with respect to
$t\in\lbrack-x,x]$.
\end{theorem}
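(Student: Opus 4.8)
The plan is to establish the Fourier--Legendre expansion \eqref{K via beta} by identifying the coefficients $\beta_j(x)$ as exactly the Legendre--Fourier coefficients of $K(x,\cdot)$ on the interval $[-x,x]$, and then upgrade pointwise/$L^2$ convergence to uniform convergence in $t$ using the regularity of $K$ guaranteed by Theorem \ref{Th Transmutation}. The starting observation is the mapping property (Proposition \ref{Prop Mapping property}): applying $T$ to $u(t)=t^k$ gives, from \eqref{Tmain},
\[
\varphi_k(x)=T[x^k]=x^k+\int_{-x}^{x}K(x,t)t^k\,dt .
\]
I would rescale via the substitution $t=x y$, so that $\int_{-x}^{x}K(x,t)t^k\,dt = x^{k+1}\int_{-1}^{1}K(x,xy)y^k\,dy$. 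Thus, writing $\widehat K(x,y):=x\,K(x,xy)$ for the rescaled kernel on $[-1,1]$, the mapping property becomes the moment identity $\int_{-1}^{1}\widehat K(x,y)\,y^k\,dy = \varphi_k(x)/x^k-1$ for every $k\ge 0$.

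Next I would compute the genuine Legendre coefficients of $\widehat K(x,\cdot)$. For fixed $x$, expand $\widehat K(x,y)=\sum_{j}a_j(x)P_j(y)$ in the orthonormal-up-to-scale Legendre basis, where $a_j(x)=\frac{2j+1}{2}\int_{-1}^{1}\widehat K(x,y)P_j(y)\,dy$. Inserting $P_j(y)=\sum_{k=0}^{j}l_{k,j}y^k$ and using the moment identity term by term gives
\[
a_j(x)=\frac{2j+1}{2}\sum_{k=0}^{j} l_{k,j}\Bigl(\frac{\varphi_k(x)}{x^k}-1\Bigr)
      =\frac{2j+1}{2}\Bigl(\sum_{k=0}^{j}\frac{l_{k,j}\varphi_k(x)}{x^k}-\sum_{k=0}^{j}l_{k,j}\Bigr).
\]
Since $\sum_{k=0}^{j}l_{k,j}=P_j(1)=1$, this collapses to exactly the direct formula \eqref{beta direct definition}, so $a_j(x)=\beta_j(x)$. (I would also note that the recursive Definition \ref{Def beta} is equivalent to \eqref{beta direct definition}: applying $T$ to $P_j(t/x)$ and using $\int_{-1}^{1}P_j P_k = 0$ for $k<j$ together with the $p_{jk}$ notation recovers the stated recursion; establishing this equivalence is a short but necessary bookkeeping step.) Undoing the rescaling, $\beta_j(x)/x$ is precisely the $j$-th Legendre coefficient of $K(x,\cdot)$ on $[-x,x]$, which gives \eqref{K via beta} in the sense of $L^2[-x,x]$ convergence.

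The main obstacle is promoting this $L^2$ (or formal) identity to the claimed \emph{uniform} convergence in $t\in[-x,x]$. Here I would lean on the smoothness asserted in Theorem \ref{Th Transmutation}, namely $K\in C^1([-b,b]\times[-b,b])$, so that for each fixed $x$ the rescaled kernel $\widehat K(x,\cdot)$ is continuously differentiable on $[-1,1]$. A $C^1$ function on $[-1,1]$ has a uniformly convergent Legendre--Fourier series: this follows from the decay $\beta_j(x)=a_j(x)=O(j^{-1/2})$ obtained by one integration by parts against $P_j$ (using $P_j'=\tfrac{1}{2j+1}(P_{j+1}'-P_{j-1}')$ or the standard estimate), combined with the pointwise bound $|P_j(y)|\le 1$ and a summability/equicontinuity argument — more precisely, a Dini--Lipschitz-type criterion for Legendre expansions, which guarantees uniform convergence when the expanded function is Lipschitz (in particular $C^1$). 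I would remark that a crude $|P_j|\le 1$ bound alone is not enough since $\sum a_j$ need not converge absolutely; the refined estimates on the Legendre partial sums (or the Christoffel--Darboux kernel) are what deliver uniformity, and this is the delicate point I would treat with most care. The factor $1/x$ in \eqref{K via beta} is harmless for fixed $x\in(0,b]$, and the asserted uniformity is only in $t$ for each fixed $x$, so no uniformity in $x$ near $0$ needs to be addressed at this stage.
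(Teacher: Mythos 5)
Your proposal is correct and takes essentially the same approach as the paper: both proofs rest on the mapping property $T[x^k]=\varphi_k(x)$, the rescaling $t=xy$ together with orthogonality of the Legendre polynomials, and the classical fact (cited from Suetin) that a $C^1$ function on $[-1,1]$ has a uniformly convergent Fourier--Legendre series. The only difference is ordering --- the paper first posits the uniformly convergent expansion with unknown coefficients and solves the resulting triangular moment system to recover the recursive Definition \ref{Def beta}, then derives \eqref{beta direct definition} by orthogonality, whereas you compute the Legendre coefficients directly (getting \eqref{beta direct definition} first) and treat the recursion as bookkeeping --- which is a cosmetic rearrangement of the same argument.
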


\begin{proof}
Since $K\in C^{1}([-b,b]\times\lbrack-b,b])$, for any $x\in( 0,b]$ it admits
(see, e.g., \cite{Suetin}) a uniformly convergent Fourier-Legendre series of
the form $\sum_{j=0}^{\infty} A_{j}(x)P_{j}\left(  \frac{t}{x}\right)  $ where
for convenience we consider $A_{j}(x)=\frac{\alpha_{j}(x)}{x}$. Substitution
of this series into (\ref{mapping property}) gives us the equality
\[
\varphi_{k}(x)=x^{k}+x^{k}\sum_{j=0}^{\infty} \frac{\alpha_{j}(x)}{x}\int
_{-x}^{x}P_{j}\left(  \frac{t}{x}\right)  \left(  \frac{t}{x}\right)  ^{k}dt
\]
for any $k=0,1,2,\ldots$. The change of the variable $y:=t/x$ leads to the
equality%
\[
\varphi_{k}(x) =x^{k}\biggl(  1+\sum_{j=0}^{\infty} \alpha_{j}(x)\int_{-1}%
^{1}P_{j}\left(  y\right)  y^{k}dy\biggr)
=x^{k}\biggl(  1+\sum_{j=0}^{k} p_{j,k}\alpha_{j}(x)\biggr)  .
\]
Solution of this system of equations leads to the conclusion $\alpha_{j}%
=\beta_{j}$ defined by the recursive formulas from Definition \ref{Def beta},
and hence to (\ref{K via beta}).

Moreover, multiplying (\ref{K via beta}) by $P_{n}\left(  \frac{t}{x}\right)
$ and integrating we obtain%
\begin{equation}
\label{int = beta}\int_{-x}^{x}K(x,t)P_{n}\left(  \frac{t}{x}\right)  dt
=\sum_{j=0}^{\infty} \frac{\beta_{j}(x)}{x}\int_{-x}^{x}P_{j}\left(  \frac
{t}{x}\right)  P_{n}\left(  \frac{t}{x}\right)  dt =\frac{2}{2n+1}\beta
_{n}(x).
\end{equation}
Hence%
\begin{align*}
\beta_{n}(x)  &  =\frac{2n+1}{2}\int_{-x}^{x}K(x,t)P_{n}\left(  \frac{t}%
{x}\right)  dt =\frac{2n+1}{2}\sum_{k=0}^{n}\int_{-x}^{x}K(x,t)l_{k,n}\left(
\frac{t}{x}\right)  ^{k}dt\\
&  =\frac{2n+1}{2}\sum_{k=0}^{n}\frac{l_{k,n}}{x^{k}}\int_{-x}^{x}%
K(x,t)t^{k}dt =\frac{2n+1}{2}\sum_{k=0}^{n}\frac{l_{k,n}}{x^{k}}\left(  T\left[
x^{k}\right]  -x^{k}\right)  .
\end{align*}
Using Proposition \ref{Prop Mapping property} we obtain
\[
\beta_{n}(x)=\frac{2n+1}{2}\sum_{k=0}^{n}\frac{l_{k,n}}{x^{k}}\left(
\varphi_{k}(x)-x^{k}\right)
\]
from where (\ref{beta direct definition}) follows due to the fact that
$P_{n}(1)=1$.
\end{proof}

In the next two theorems we establish a relation between the rate of
convergence of partial sums of the series \eqref{K via beta} and the
smoothness of the potential $q$.
Recall that for $q\in C^{(p)}[0,b]$ the integral kernel $K$ is $p+1$ times
continuously differentiable with respect to each variable \cite[\S 2]%
{Marchenko} justifying definition \eqref{dtKBound}.

We will denote the partial sum of the series \eqref{K via beta} by
\[
K_{N}
(x,t):=\sum_{j=0}^{N} \frac{\beta_{j}(x)}{x}P_{j}\left(  \frac{t}{x}\right).
\]

\begin{theorem}
\label{Th Convergence rate Direct} Suppose that $q\in C^{(p)}[0,b]$ and
define
\begin{equation}
\label{dtKBound}M:= \max_{0\le x\le b,\ |t|\le x}\bigl| \partial^{p+1}_{t}
K(x,t)\bigr|.
\end{equation}
Then for all $N>p$, $0<x\le b$, and $|t|\le x$
\begin{equation}
\label{EstimateK ParSum}\bigl| K(x,t) - K_N(x,t)  \bigr|\le\frac{c_{p} M x^{p+1}}{N^{p+1/2}},
\end{equation}
where the constant $c_{p}$ does not depend on $q$ and $N$.
\end{theorem}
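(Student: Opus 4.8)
The plan is to reduce the statement to a scaled, classical question about the uniform convergence of Fourier--Legendre partial sums, and then to invoke two standard tools: the growth of the Legendre Lebesgue constant and a Jackson-type estimate for the best polynomial approximation.

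First I would fix $x\in(0,b]$ and pass to the variable $y:=t/x\in[-1,1]$, introducing $g_{x}(y):=K(x,xy)$. By the smoothness recalled just before the theorem (since $q\in C^{(p)}[0,b]$, the kernel $K(x,\cdot)$ is $p+1$ times continuously differentiable), one has $g_{x}\in C^{(p+1)}[-1,1]$ and $g_{x}^{(p+1)}(y)=x^{p+1}\bigl(\partial_{t}^{p+1}K\bigr)(x,xy)$, so that $\|g_{x}^{(p+1)}\|_{\infty}\le x^{p+1}M$. The change of variable used in the proof of Theorem \ref{Th transmutation kernel via beta and P} shows that the Fourier--Legendre coefficients of $g_{x}$ are exactly $\beta_{j}(x)/x$; hence $K_{N}(x,xy)$ is precisely the $N$th Fourier--Legendre partial sum $S_{N}g_{x}(y)$, and
\[
\bigl|K(x,t)-K_{N}(x,t)\bigr|\le\|g_{x}-S_{N}g_{x}\|_{\infty,[-1,1]}.
\]
It therefore suffices to estimate the uniform error of the Fourier--Legendre projection of $g_{x}$ on $[-1,1]$, with all of the $x$- and $q$-dependence now isolated in the single factor $x^{p+1}M$.

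Next I would use the standard comparison with best approximation. Since $S_{N}$ reproduces polynomials of degree $\le N$, for every such polynomial $P$ one has $\|g_{x}-S_{N}g_{x}\|_{\infty}\le(1+\Lambda_{N})\|g_{x}-P\|_{\infty}$, where $\Lambda_{N}$ is the Lebesgue constant (the operator norm in $C[-1,1]$) of the Fourier--Legendre projection; taking the infimum over $P$ gives $\|g_{x}-S_{N}g_{x}\|_{\infty}\le(1+\Lambda_{N})E_{N}(g_{x})$, where $E_{N}$ denotes the error of best uniform approximation by polynomials of degree $\le N$. At this point two classical facts would finish the argument: the Legendre Lebesgue constant satisfies $\Lambda_{N}\le c\sqrt{N}$, and Jackson's theorem applied to a $C^{(p+1)}$ function yields $E_{N}(g_{x})\le C_{p+1}N^{-(p+1)}\|g_{x}^{(p+1)}\|_{\infty}$ for $N\ge p+1$. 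Multiplying these and inserting $\|g_{x}^{(p+1)}\|_{\infty}\le x^{p+1}M$ gives
\[
\bigl|K(x,t)-K_{N}(x,t)\bigr|\le\bigl(1+c\sqrt{N}\bigr)\frac{C_{p+1}}{N^{p+1}}\,x^{p+1}M\le\frac{c_{p}\,M\,x^{p+1}}{N^{p+1/2}},
\]
which is exactly the asserted bound; note that the hypothesis $N>p$ is precisely the range in which Jackson's estimate with $m=p+1$ applies.

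I expect the main obstacle to be obtaining the exact exponent $p+1/2$ rather than losing a half-power. A naive route---bounding each coefficient $\beta_{j}(x)/x$ (for instance by iterated integration by parts against the identity $(2j+1)P_{j}=P_{j+1}'-P_{j-1}'$, whose boundary terms vanish because $P_{j+1}(\pm1)=P_{j-1}(\pm1)$) and then summing the termwise bounds using $|P_{j}|\le1$---produces a coefficient decay of order $j^{-(p+1/2)}$ but loses a power on summation, giving only $N^{-(p-1/2)}$ and even failing to converge for $p=0$. The key is that the $\sqrt{N}$ growth of the Legendre Lebesgue constant, combined with the full $N^{-(p+1)}$ from Jackson's theorem, is exactly calibrated to land on $N^{-(p+1/2)}$, so the care lies in exploiting the operator norm of the projection rather than crude termwise summation. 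A secondary point to verify is that the constant $c_{p}$ is genuinely independent of both $q$ and $x$; this is automatic, since after the scaling $y=t/x$ every dependence on $q$ and $x$ is confined to the factor $\|g_{x}^{(p+1)}\|_{\infty}\le x^{p+1}M$, while $c$, $C_{p+1}$ and hence $c_{p}$ depend only on $p$.
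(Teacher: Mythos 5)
Your proof is correct, and its first half is exactly the paper's argument: fix $x\in(0,b]$, rescale to $g_{x}(y)=K(x,xy)$ on $[-1,1]$, bound $\|g_{x}^{(p+1)}\|_{\infty}\le x^{p+1}M$, and recognize $K_{N}(x,x\,\cdot)$ as the $N$th Fourier--Legendre partial sum of $g_{x}$. The only divergence is in how the rate $N^{-(p+1/2)}$ for the projection error of a $C^{(p+1)}$ function is justified: the paper cites it as a known estimate (the proof of Theorem 4.10 in \cite{Suetin} together with Theorem 5.2.1 of \cite{Timan}), whereas you re-derive it from Lebesgue's lemma, the $O(\sqrt{N})$ growth of the Legendre Lebesgue constant, and Jackson's theorem for $C^{(p+1)}$ functions. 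This is precisely the classical mechanism behind the cited estimate, so the two arguments are mathematically equivalent; what your version buys is self-containedness (modulo the two named classical facts) and an explicit explanation of where the exponent $p+1/2$ comes from, namely the calibration of the $\sqrt{N}$ Lebesgue constant against Jackson's $N^{-(p+1)}$ --- your closing remark that naive termwise summation of coefficient bounds loses a half-power (and fails outright for $p=0$) correctly identifies why the projection-norm route is the right one. A minor bonus of your route: the paper's cited inequality is stated for $N>p+1$ while the theorem claims the bound for all $N>p$, and your use of Jackson's estimate for $N\ge p+1$ covers the claimed range exactly, removing that small mismatch.
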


\begin{proof}
The following estimate for the remainder of the Fourier-Legendre series for a
function $g\in C^{(p+1)}[-1,1]$ is known (see the proof of Theorem 4.10 from
\cite{Suetin} together with \cite[Theorem 5.2.1]{Timan}): for all $N>p+1$
\begin{equation}
\label{EstimateSuetin}\max_{[-1,1]}|g(x)-g_{N}(x)|\le\frac{c_{p+1}M_{g}%
}{N^{p+1/2}},
\end{equation}
where $g_{N}(x)=\sum_{k=0}^{N} a_{k} P_{k}(x)$ is a partial sum of the
Fourier-Legendre series of the function $g$, $M_{g}:=\max_{[-1,1]}%
|g^{(p+1)}(x)|$ and the constant $c_{p+1}$ does not depend on $g$ and $N$.

For each $x\in(0,b]$ let us consider a function $g(y):=K(x, xy)$, $y\in
[-1,1]$. Since the integral kernel $K$ is $p+1$ times continuously
differentiable with respect to the second variable, $g\in C^{(p+1)}[-1,1]$,
and $g^{(p+1)}(y) = x^{p+1}\left.  \partial_{t}^{p+1}K(x,t)\right|  _{t=xy}$.
Hence
\begin{equation}
\label{EstimateDerG}\max_{[-1,1]}|g^{(p+1)}(y)|\le x^{p+1}\max_{y\in
[-1,1]}\left|  \left.  \partial_{t}^{p+1}K(x,t)\right|  _{t=xy}\right|  \le
x^{p+1}M.
\end{equation}

Now \eqref{EstimateK ParSum} follows directly from \eqref{EstimateSuetin} and
\eqref{EstimateDerG} noting that $\sum_{j=0}^{N} \frac{\beta_{j}(x)}x
P_{j}(y)$ is a partial sum of the Fourier-Legendre series for the function
$g(y)$.
\end{proof}

\begin{remark}
Actually, the value of the integral kernel $K$ at some point $(x,t)$, $x>0$,
depends only on values of the potential $q$ on the segment $[0,x]$ and does
not depend on values of $q$ on $(x,b]$, which can be deduced, e.g., from
\eqref{IntEq for K}. Therefore the estimate \eqref{EstimateK ParSum} remains
valid if we change all entries of $b$ in Theorem
\ref{Th Convergence rate Direct} by some $b^{\prime}$ such that $x\leq
b^{\prime}\leq b$.
\end{remark}

The next theorem partially inverts the result of Theorem
\ref{Th Convergence rate Direct}. We need the following auxiliary result on the connection between the smoothness of the function $K(x,\cdot)$ (for a fixed $x$) and the smoothness of the potential $q$. Even though similar results are well known, we are not aware of a precise reference. In any case, the statement of the lemma can be easily verified using the integral equation satisfied by $K$ (see, e.g., \cite[Sect. 1.2]{LevitanInverse} and \cite[\S 2]{Marchenko})
\begin{equation}
\label{IntEq for K}K(x,t) = \frac{h}2 + \frac{1}2\int_{0}^{\frac{x+t}2}
q(s)\,ds + \int_{0}^{\frac{x+t}2} \int_{0}^{\frac{x-t}2}q(\alpha
+\beta)K(\alpha+\beta, \alpha-\beta)\,d\beta\,d\alpha.
\end{equation}

\begin{lemma}\label{Lemma diff K and q}
Let $x>0$ be fixed. Suppose that there
exists an integer $p\ge 1$ such that
$K(x,\cdot)\in
C^{(p)}[-x,x]$. Then $q\in C^{(p-1)}[0,x]$. If additionally $K(x,\cdot)\in
C^{(r)}(-x,x)$ for some $p<r\le 2p+1$, then $q\in C^{(r-1)}{(0,x)}$.
\end{lemma}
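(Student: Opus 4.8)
The plan is to read $q$ directly off the integral equation \eqref{IntEq for K} with $x$ held fixed. Put $\xi:=\tfrac{x+t}2\in[0,x]$ and $G(\xi):=K(x,2\xi-x)=K(x,t)$; since this substitution is affine, the hypotheses on $K(x,\cdot)$ translate verbatim into statements about $G$. Separating in \eqref{IntEq for K} the single integral from the double one and writing
\[
J(\xi):=\int_0^{\xi}\!\!\int_0^{x-\xi}q(\alpha+\beta)\,K(\alpha+\beta,\alpha-\beta)\,d\beta\,d\alpha,
\]
the identity reads $\tfrac12\int_0^{\xi}q(s)\,ds=G(\xi)-\tfrac h2-J(\xi)$, and one differentiation gives $q(\xi)=2G'(\xi)-2J'(\xi)$. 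Thus any regularity of $q$ beyond that carried by the explicit tangential datum $G$ must come from the double–integral term, and the whole problem reduces to controlling the smoothness in $\xi$ of $J'$.

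The key manipulation is to remove $\xi$ from the argument of $q$. Differentiating $J$ and substituting $\sigma:=\alpha+\beta$ in the inner integral turns $J'$ into a difference of integrals of the form $\int q(\sigma)\,K(\sigma,2\xi-\sigma)\,d\sigma$, whose only $\xi$–dependence sits in the limits and in the second slot of $K$. Repeated differentiation then lands only on $K$ (producing $\partial_t^mK$ inside the integrals) and on the limits, the latter generating boundary terms living on the two characteristics $t=\pm\sigma$. There $K$ reduces to the explicit values $K(\sigma,-\sigma)=\tfrac h2$ and $K(\sigma,\sigma)=\tfrac h2+\tfrac12\int_0^{\sigma}q$, and the relation $\frac{d}{d\sigma}K(\sigma,-\sigma)=0$ produces the decisive cancellations. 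Tracking the requirements of this computation shows $J'\in C^{(k+1)}$ as soon as $q\in C^{(k)}$, the binding condition being that $\partial_t^{k+1}K$ be continuous, i.e.\ $K\in C^{(k+1)}$ — exactly the forward regularity recalled before Theorem~\ref{Th Convergence rate Direct} ($q\in C^{(p)}\Rightarrow K\in C^{(p+1)}$).

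For the first assertion I would run a finite induction. From the standing hypothesis $q\in C[0,x]$ one has $K\in C^{(1)}$, hence $J'\in C^{(0)}$, so $q=2G'-2J'$ is continuous; at each further step the derivative of $q$ just gained upgrades $K$ by one order, hence $J'$ by one order, and since $G\in C^{(p)}$ contributes $G'\in C^{(p-1)}$, it upgrades $q$ by one order. After $p-1$ rounds this terminates at $q\in C^{(p-1)}[0,x]$. The second assertion continues the same bootstrap, now fed by the stronger tangential regularity $G\in C^{(r)}$ on the open interval; the point is that improving $q$ only on $(0,x)$ never improves the regularity of the globally defined kernel beyond the level $K\in C^{(p)}$ fixed in the first part. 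Hence the transversal (first–argument) derivatives of $K$ that $J$ forces into the computation cannot be pushed past order $p$, while the purely tangential derivatives furnished by $G\in C^{(r)}$ are available up to order $r$; balancing these two budgets against each other through \eqref{IntEq for K} — each extracted derivative of $q$ ultimately costing control of one mixed $x$–$t$ derivative of $K$ whose $x$–order is capped at $p$ — is what yields the ceiling $r\le 2p+1$ and the conclusion $q\in C^{(r-1)}(0,x)$ on every compact subinterval of $(0,x)$.

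I expect the hardest part to be precisely this last regularity bookkeeping for the double–integral term: one must show that after $m$ differentiations every surviving contribution is either a boundary term on a characteristic $t=\pm\sigma$, where the explicit values of $K$ and the cancellation $\frac{d}{d\sigma}K(\sigma,-\sigma)=0$ keep it controlled, or an interior integral of a mixed derivative $\partial_x^i\partial_t^jK$ with $i\le p$, and to read off from this the exact threshold $r=2p+1$. Everything else — the affine change of variables, differentiation under the integral sign, and the invocation of the forward implication $q\in C^{(p)}\Rightarrow K\in C^{(p+1)}$ — should be routine.
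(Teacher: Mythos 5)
Your strategy --- freeze $x$, read $q$ off the integral equation \eqref{IntEq for K} via $q(\xi)=2G'(\xi)-2J'(\xi)$, and bootstrap with the forward implication $q\in C^{(k)}\Rightarrow K\in C^{(k+1)}$ --- is exactly the route the paper indicates (the paper gives no detailed proof, only the remark that the lemma "can be easily verified" from \eqref{IntEq for K}), and your treatment of the \emph{first} assertion is essentially correct: on the closed interval every term produced by repeatedly differentiating $J'$ involves only $q$-derivatives of order $\le k$ and derivatives of $K$ of total order $\le k+1$, all available at that stage of the induction, so after $p-1$ rounds one lands at $q\in C^{(p-1)}[0,x]$.

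The genuine gap is in the second assertion, and it sits precisely in the bookkeeping you defer. Pass to characteristic variables $u=\frac{x'+t'}{2}$, $v=\frac{x'-t'}{2}$, so $K=\frac{h}{2}+\frac12\int_0^u q+I(u,v)$ and $\frac{d}{d\xi}$ along the line $x'=x$ is the tangential derivative $D=\partial_u-\partial_v$. Iterating your manipulation (which is the correct one: never differentiate $q(u+v)$, integrate by parts instead), the $m$-th derivative of $\frac{d}{d\xi}I(\xi,x-\xi)$ contains the boundary term $q(x-\xi)\,\bigl[D^{m-1}K\bigr]$ evaluated on the characteristic $t'=-x'$, and since $D$ applied to the datum $\frac12\int_0^u q$ returns $\frac12 q^{(j-1)}(u)$, this term equals $\frac12\,q(x-\xi)\,q^{(m-2)}(0)$ plus lower-order quantities; it is \emph{not} killed by $K(\sigma,-\sigma)=h/2$ or by $\frac{d}{d\sigma}K(\sigma,-\sigma)=0$, and it appears exactly once, so it does not cancel. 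Its existence forces $m-2\le p-1$: the bootstrap you describe therefore stalls at $q\in C^{(p+1)}(0,x)$ (for $p=1$ this happens to coincide with the target $C^{(2p)}$, which is why the lowest case looks fine, but for $p\ge2$ it falls far short of the claimed $C^{(r-1)}(0,x)$ with $r$ up to $2p+1$). Reaching the stated threshold needs an idea absent from the proposal: the offending contributions enter only through correlation-type integrals of the form $\int q(s+v)\,q^{(j)}(s)\,ds$, pairing the singularity of $q$ at the endpoint $0$ against values of $q$ near interior points or near the other endpoint $x$, so that derivatives can be \emph{redistributed} by partial integration with each of the two endpoint singularities absorbing at most $p-1$ of them; it is this double budget, not a cap on mixed $x$--$t$ derivatives of $K$ at first-argument order $p$, that produces the ceiling $r\le2p+1$. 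Your ``balancing of budgets'' gestures in this direction but attaches the budgets to the wrong objects and is never carried out --- and since this is exactly the step you yourself flag as the hard part, the second half of the lemma remains unproved.
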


\begin{theorem}
\label{Th Convergence rate Inverse} Let $x>0$ be fixed. Suppose that there
exist an integer $p\ge2$ and the constants $c$ and $\varepsilon>0$ such that
for all $N>p+1$ and all $t$, $|t|\le x$,
\begin{equation}
\label{EstimateInverse}\bigl| K(x,t) - K_N(x,t)  \bigr|\le\frac{c}{N^{p+\varepsilon}}.
\end{equation}
Then $q\in C^{([p/2]-1)}[0,x]\cap C^{(p-1)}(0,x)$, where $[a]$ denotes the
largest integer less or equal to $a$.
\end{theorem}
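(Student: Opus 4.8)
The plan is to reduce the statement to an inverse approximation theorem for Legendre series and then invoke Lemma \ref{Lemma diff K and q}. First I would fix $x>0$ and pass to the rescaled function $g(y):=K(x,xy)$ on $[-1,1]$. By \eqref{K via beta} its Fourier--Legendre coefficients are exactly $\beta_j(x)/x$, so its partial sum is $S_N g(y)=K_N(x,xy)$, and the hypothesis \eqref{EstimateInverse} becomes the uniform bound $\|g-S_N g\|_{C[-1,1]}\le c/N^{p+\varepsilon}$ for all $N>p+1$. Everything then reduces to deducing the smoothness of $g$, that is of $K(x,\cdot)$, from this decay rate, after which Lemma \ref{Lemma diff K and q} transfers the smoothness to $q$. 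This is the converse direction to Theorem \ref{Th Convergence rate Direct}, and the loss factor $2$ between the interior and the endpoint conclusions is precisely what produces the two different indices $p-1$ and $[p/2]-1$.

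The core step is a Bernstein--Markov argument via dyadic blocking. I would write $g=S_{N_0}g+\sum_{m\ge m_0}U_m$ with $U_m:=S_{2^{m+1}}g-S_{2^m}g$, a polynomial of degree at most $2^{m+1}$; since $U_m=(g-S_{2^m}g)-(g-S_{2^{m+1}}g)$, the hypothesis gives $\|U_m\|_{C[-1,1]}\le C\,2^{-m(p+\varepsilon)}$. To control derivatives I would apply two classical polynomial inequalities (see, e.g., \cite{Timan}): on any compact subinterval of $(-1,1)$ Bernstein's inequality yields $\|U_m^{(k)}\|\le C_{k,\delta}\,(2^{m+1})^{k}\|U_m\|_{C[-1,1]}$, whereas on the whole segment the Markov inequality only gives $\|U_m^{(k)}\|_{C[-1,1]}\le C_k\,(2^{m+1})^{2k}\|U_m\|_{C[-1,1]}$. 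Summing the resulting series, $\sum_m 2^{-m(p+\varepsilon-k)}$ converges on compact subintervals whenever $k\le p$, while $\sum_m 2^{-m(p+\varepsilon-2k)}$ converges up to the endpoints whenever $2k<p+\varepsilon$, i.e.\ whenever $k\le[p/2]$ (the borderline being safe precisely because $\varepsilon>0$). Term-by-term differentiation is justified by the uniform convergence of all these series, giving $g\in C^{(p)}(-1,1)\cap C^{([p/2])}[-1,1]$, that is, $K(x,\cdot)\in C^{(p)}(-x,x)\cap C^{([p/2])}[-x,x]$.

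It remains to feed this smoothness into Lemma \ref{Lemma diff K and q}. From $K(x,\cdot)\in C^{([p/2])}[-x,x]$ the first part of the lemma (with the lemma's index equal to $[p/2]\ge1$, which holds since $p\ge2$) gives $q\in C^{([p/2]-1)}[0,x]$. For the sharper interior conclusion I would apply the second part of the lemma with base smoothness $[p/2]$ and $r=p$; the admissibility condition $[p/2]<p\le 2[p/2]+1$ holds for every $p\ge2$ (with equality on the right when $p$ is odd), and the additional hypothesis $K(x,\cdot)\in C^{(p)}(-x,x)$ is exactly what the dyadic argument supplied, whence $q\in C^{(p-1)}(0,x)$. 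Combining the two inclusions proves the theorem.

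The main obstacle I anticipate is the endpoint behaviour of the Legendre expansion. The naive coefficient estimate $|\beta_N(x)/x|\le C/N^{p+\varepsilon}$ obtained from $\|U_m\|$ is not by itself strong enough to sum the differentiated series up to the endpoints; one genuinely needs the Markov inequality, and its unavoidable $N^{2k}$ growth is what halves the number of derivatives recovered on the closed interval and forces the floor $[p/2]$. Making the borderline case (small $\varepsilon$, $p$ even, $k=p/2$) converge, and verifying that the admissibility window of Lemma \ref{Lemma diff K and q} is never exceeded, are the two points requiring care; the interior estimate, by contrast, is the standard Bernstein inverse theorem and loses nothing.
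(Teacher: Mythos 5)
Your proposal is correct and follows essentially the same route as the paper: both interpret \eqref{EstimateInverse} as a bound on the best uniform approximation of $K(x,\cdot)$ by degree-$N$ polynomials, deduce $K(x,\cdot)\in C^{(p)}(-x,x)\cap C^{([p/2])}[-x,x]$, and then conclude via Lemma \ref{Lemma diff K and q} exactly as you do, including the check of the admissibility window $[p/2]<p\le 2[p/2]+1$. The only difference is that the paper simply cites a ready-made inverse approximation theorem (\cite[Theorem 31]{KKTT}) for the middle step, whereas you reprove that inverse theorem from scratch by the standard dyadic-block Bernstein/Markov argument, which is sound and has the virtue of making transparent why the $N^{2k}$ endpoint growth in Markov's inequality is exactly what halves the recoverable smoothness to $[p/2]$ on the closed interval.
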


\begin{proof}
The sum $\sum_{j=0}^{N}\frac{\beta_{j}(x)}{x}P_{j}\bigl(\frac{\cdot}{x}\bigr)$
is a polynomial of degree at most $N$ approximating the function $K(x,\cdot)$.
That is, the inequality \eqref{EstimateInverse} provides an upper bound for
the best uniform approximation of the function $K(x,\cdot)$ by degree $N$
polynomials. The application of the inverse approximation theorem (see, e.g.,
\cite[Theorem 31]{KKTT}) leads to the conclusion that $K(x,\cdot)\in
C^{(p)}(-x,x)\cap C^{[p/2]}[-x,x]$. Now the proof follows directly from Lemma \ref{Lemma diff K and q}.
\end{proof}


\section{Representation for solutions of the Schrödinger
equation\label{Sect Representation Solutions}}

\begin{theorem}
\label{Th Representation of solutions via Bessel}The solutions $c(\omega,x)$
and $s(\omega,x)$ of equation \eqref{SL_omega} admit the following
representations
\begin{equation}
\label{c(omega,x) via bessel}%
\begin{split}
c(\omega,x)  &  =\cos\omega x+\sqrt{\frac{2\pi}{\omega x}}\sum_{n=0}^{\infty}
(-1)^{n}\beta_{2n}(x)J_{2n+1/2}(\omega x)\\
&  =\cos\omega x+2\sum_{n=0}^{\infty} (-1)^{n}\beta_{2n}(x)j_{2n}(\omega x)
\end{split}
\end{equation}
and%
\begin{equation}
\label{s(omega,x) via bessel}%
\begin{split}
s(\omega,x)  &  =\sin\omega x+\sqrt{\frac{2\pi}{\omega x}}\sum_{n=0}^{\infty}
(-1)^{n}\beta_{2n+1}(x)J_{2n+3/2}(\omega x)\\
&  =\sin\omega x+2\sum_{n=0}^{\infty} (-1)^{n}\beta_{2n+1}(x)j_{2n+1}(\omega
x)
\end{split}
\end{equation}
where $j_{k}$ stands for the spherical Bessel function of order $k$, the
series converge uniformly with respect to $x$ on $[0,b]$ and converge
uniformly with respect to $\omega$ on any compact subset of the complex plane
of the variable $\omega$. Moreover, for the functions
\begin{equation}
c_{N}(\omega,x)=\cos\omega x+2\sum_{n=0}^{[N/2]} (-1)^{n}\beta_{2n}(x)j_{2n}%
(\omega x) \label{cN}%
\end{equation}
and
\begin{equation}
s_{N}(\omega,x)=\sin\omega x+2\sum_{n=0}^{[(N-1)/2]} (-1)^{n}\beta_{2n+1}%
(x)j_{2n+1}(\omega x) \label{sN}%
\end{equation}
the following estimates hold%
\begin{equation}
\label{estc1}\left\vert c(\omega,x)-c_{N}(\omega,x)\right\vert \leq
2|x|\varepsilon_{N}(x)\qquad\text{and}\qquad\left\vert s(\omega,x)-s_{N}%
(\omega,x)\right\vert \leq2|x|\varepsilon_{N}(x)
\end{equation}
for any $\omega\in\mathbb{R}$, $\omega\ne0$, and
\begin{equation}
\label{estc2}\left\vert c(\omega,x)-c_{N}(\omega,x)\right\vert \leq
\frac{2\varepsilon_{N}(x)\,\sinh(Cx)}{C}\qquad\text{and}\qquad\left\vert
s(\omega,x)-s_{N}(\omega,x)\right\vert \leq\frac{2\varepsilon_{N}(x)\,\sinh
(Cx)}{C}%
\end{equation}
for any $\omega\in\mathbb{C}$, $\omega\neq0$ belonging to the strip
$\left\vert \operatorname{Im}\omega\right\vert \leq C$, $C\geq0$, where
$\varepsilon_N$ is a sufficiently small nonnegative function such that
$\left\vert K(x,t)-K_{N}(x,t)\right\vert \leq\varepsilon_N(x)$ which exists due
to Theorem \ref{Th transmutation kernel via beta and P} (an estimate for
$\varepsilon_N(x)$ is presented in \eqref{EstimateK ParSum}).
\end{theorem}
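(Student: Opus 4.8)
The plan is to run everything through the integral operator \eqref{Tmain} together with the Fourier-Legendre form of the kernel supplied by Theorem \ref{Th transmutation kernel via beta and P}. Starting from $c(\omega,x)=T[\cos\omega x]=\cos\omega x+\int_{-x}^{x}K(x,t)\cos\omega t\,dt$, I substitute $K(x,t)=\sum_{j}\frac{\beta_{j}(x)}{x}P_{j}(t/x)$ and interchange summation and integration. This interchange is legitimate because, for each fixed $x\in(0,b]$, the series converges uniformly in $t\in[-x,x]$ by Theorem \ref{Th transmutation kernel via beta and P} and $\cos\omega t$ is bounded on that interval. Everything then reduces to the single family of integrals $\int_{-x}^{x}P_{j}(t/x)\cos\omega t\,dt$, and its sine counterpart for $s(\omega,x)=T[\sin\omega x]$; the substitution $y=t/x$ turns them into $x\int_{-1}^{1}P_{j}(y)\cos(\omega x y)\,dy$ and $x\int_{-1}^{1}P_{j}(y)\sin(\omega x y)\,dy$.

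The computational core is the classical Poisson-type identity $\int_{-1}^{1}P_{\ell}(y)e^{izy}\,dy=2i^{\ell}j_{\ell}(z)$, which is entire in $z$ and so holds for every $z\in\mathbb{C}$. Splitting $e^{izy}$ into its even and odd parts and using that $P_{\ell}$ has parity $(-1)^{\ell}$, I obtain that $\int_{-1}^{1}P_{\ell}(y)\cos(zy)\,dy$ vanishes for odd $\ell$ and equals $2(-1)^{n}j_{2n}(z)$ for $\ell=2n$, whereas $\int_{-1}^{1}P_{\ell}(y)\sin(zy)\,dy$ vanishes for even $\ell$ and equals $2(-1)^{n}j_{2n+1}(z)$ for $\ell=2n+1$. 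Inserting these with $z=\omega x$ collapses the cosine series onto the even-indexed $\beta_{k}$ and the sine series onto the odd-indexed ones, which is precisely \eqref{c(omega,x) via bessel} and \eqref{s(omega,x) via bessel}; the forms with $J_{\nu}$ follow from $j_{\ell}(z)=\sqrt{\pi/(2z)}\,J_{\ell+1/2}(z)$.

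For the estimates I would exploit that the odd Legendre terms never contribute to $c$ and the even ones never to $s$, so the partial sums are exactly the images of the truncated kernel: $c_{N}(\omega,x)=\cos\omega x+\int_{-x}^{x}K_{N}(x,t)\cos\omega t\,dt$ and $s_{N}(\omega,x)=\sin\omega x+\int_{-x}^{x}K_{N}(x,t)\sin\omega t\,dt$, because the upper limits $[N/2]$ and $[(N-1)/2]$ in \eqref{cN}, \eqref{sN} enumerate precisely the even and odd indices not exceeding $N$. Consequently $c(\omega,x)-c_{N}(\omega,x)=\int_{-x}^{x}\bigl(K(x,t)-K_{N}(x,t)\bigr)\cos\omega t\,dt$ and similarly for $s$. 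For real $\omega$ I bound $|\cos\omega t|,|\sin\omega t|\le1$ and $|K-K_{N}|\le\varepsilon_{N}(x)$, and $\int_{-x}^{x}dt=2|x|$ yields \eqref{estc1}; for $\omega$ in the strip $|\operatorname{Im}\omega|\le C$ I use the elementary inequalities $|\cos\omega t|,|\sin\omega t|\le\cosh(C|t|)$ and $2\int_{0}^{x}\cosh(Ct)\,dt=2\sinh(Cx)/C$ to obtain \eqref{estc2}. Since these bounds are independent of $\omega$ and, by Theorem \ref{Th Convergence rate Direct} applied with $p=0$ (continuity of $q$ gives $K\in C^{1}$, hence $\varepsilon_{N}(x)\le c_{0}Mx/\sqrt{N}$), tend to zero uniformly on $[0,b]$, the claimed uniform convergence in $x$ and in $\omega$ on compacta follows.

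I expect the genuine content to lie in the Poisson identity and in the justification of the term-by-term integration; the estimates themselves are then routine. The one point requiring care is the index bookkeeping, namely verifying that $c_{N}$ and $s_{N}$ together correspond to the kernel truncated at degree exactly $N$, so that the error is controlled by $\varepsilon_{N}$ and not by a shifted partial sum. The complex-strip estimate costs nothing extra beyond the real case once the bounds $|\cos z|^{2}=\cos^{2}(\operatorname{Re}z)+\sinh^{2}(\operatorname{Im}z)$ and its sine analogue are recorded.
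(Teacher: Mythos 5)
Your proposal is correct and follows the same skeleton as the paper's proof: substitute the Fourier--Legendre expansion of $K(x,t)$ from Theorem \ref{Th transmutation kernel via beta and P} into $c=T[\cos\omega x]$ and $s=T[\sin\omega x]$, integrate the Legendre polynomials against cosine and sine to produce spherical Bessel functions, and bound $c-c_{N}$, $s-s_{N}$ by integrating $|K-K_{N}|\le\varepsilon_{N}(x)$ against $|\cos\omega t|$, $|\sin\omega t|$. Two local steps differ, and both are worth recording. First, where you use the Poisson-type identity $\int_{-1}^{1}P_{\ell}(y)e^{izy}\,dy=2i^{\ell}j_{\ell}(z)$ split by parity, the paper cites the equivalent tabulated formula 2.17.7 of Prudnikov et al.; the content is identical. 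Second, and more substantively, the paper does not use its own error estimates to get uniform convergence in $\omega$ on compact sets: it appeals to Watson's theory of Neumann series (the radius of convergence of a Neumann series equals that of its associated power series, the latter supplied by the SPPS representation). You instead read this convergence off \eqref{estc2}: any compact set lies in a strip $|\operatorname{Im}\omega|\le C$, the bound $2\varepsilon_{N}(x)\sinh(Cx)/C$ there is independent of $\omega$, and $\varepsilon_{N}(x)\to 0$ uniformly by \eqref{EstimateK ParSum} with $p=0$. Your route is more self-contained (it is in fact how the paper argues the uniform convergence in $x$), and it hinges precisely on the observation you take care to verify: the truncations \eqref{cN}, \eqref{sN} are exactly the images under the integral term of the truncated kernel $K_{N}$, since $n\le[N/2]$ enumerates the even indices $2n\le N$ and $n\le[(N-1)/2]$ the odd indices $2n+1\le N$, the complementary-parity terms integrating to zero. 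The only point to flag is trivial: \eqref{estc1}, \eqref{estc2} are stated for $\omega\ne 0$, so for a compact set containing the origin one should note that the same bound persists at $\omega=0$ by continuity (or directly, since $\sinh(Cx)/C\ge x$), which closes the argument.
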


\begin{proof}
Substitution of $K(x,t)$ in the form of the series (\ref{K via beta}) into
(\ref{c and s definition}) leads to the equalities%
\begin{equation*}
c(\omega,x)    =\cos\omega x+\sum_{j=0}^{\infty}
\frac{\beta_{j}(x)}{x}\int_{-x}^{x}P_{j}\left(  \frac{t}{x}\right)  \cos\omega
t\,dt =\cos\omega x+\sum_{j=0}^{\infty}
\beta_{j}(x)\int_{-1}^{1}P_{j}\left(  y\right)  \cos\left(  \omega xy\right)
\,dy
\end{equation*}
and
\[
s(\omega,x)=\sin\omega x+\sum_{j=0}^{\infty}
\beta_{j}(x)\int_{-1}^{1}P_{j}\left(  y\right)  \sin\left(  \omega xy\right)
\,dy.
\]
Using formula 2.17.7 from \cite[p. 433]{Prudnikov},%
\[
\int_{0}^{a}\left\{
\begin{array}
[c]{l}%
P_{2n+1}\left(  \frac{y}{a}\right)  \cdot\sin by\\
P_{2n}\left(  \frac{y}{a}\right)  \cdot\cos by
\end{array}
\right\}  dy=\left(  -1\right)  ^{n}\sqrt{\frac{\pi a}{2b}}J_{2n+\delta
+1/2}(ab),\text{\quad}\delta=\left\{
\begin{array}
[c]{l}%
1\\
0
\end{array}
\right\}  ,\text{\quad}a>0,
\]
we obtain the representations (\ref{c(omega,x) via bessel}) and
(\ref{s(omega,x) via bessel}).

The convergence of the series with respect to $\omega$ can be established
using the fact that for each $x$ the series represent the Neumann series (see,
e.g., \cite{Watson} and \cite{Wilkins}). Indeed, the function $\omega\left(
c(\omega,x)-\cos\omega x\right)  $ regarded as a function of a complex
variable $\omega$ is entire and as the radius of convergence of the Neumann
series coincides \cite[pp. 524-526]{Watson} with the radius of convergence of
its associated power series (obtained from the SPPS representation) we obtain
that the series (\ref{c(omega,x) via bessel}) and (\ref{s(omega,x) via bessel}%
) converge uniformly on any compact subset of the complex plane of the
variable $\omega$.

Consider a complex $\omega\neq0$ belonging to the strip $\left\vert \operatorname{Im}%
\omega\right\vert \leq C$. We obtain
\begin{equation}
\label{Error cN2}%
\begin{split}
\vert c(\omega,x)  &  -c_{N}(\omega,x)\vert\leq\int_{-x}^{x}\left\vert
K(x,t)-K_{N}(x,t)\right\vert \left\vert \cos\omega t\right\vert \,dt\leq
2\varepsilon_{N}(x)\int_{0}^{x}\left\vert \cos\omega t\right\vert \,dt\\
&  \leq\varepsilon_{N}(x)\int_{0}^{x}\left(  e^{\operatorname{Im}\omega
t}+e^{-\operatorname{Im}\omega t}\right)  \,dt=2\varepsilon_{N}(x)\int_{0}%
^{x}\cosh\left(  \left\vert \operatorname{Im}\omega\right\vert \,t\right)
\,dt=\frac{2\varepsilon_{N}(x)\,\sinh(\left\vert \operatorname{Im}\omega
\right\vert x)}{\left\vert \operatorname{Im}\omega\right\vert }.
\end{split}
\end{equation}
Since the function $\sinh(\xi x)/\xi$ is monotonically increasing with respect
to both variables when $\xi,x\geq0$, we obtain the required inequality
(\ref{estc2}). The second inequality in \eqref{estc2} and the inequalities \eqref{estc1} are
proved similarly.

The uniform convergence of the series \eqref{c(omega,x) via bessel} and
\eqref{s(omega,x) via bessel} with respect to the variable $x$ follows
directly from the inequalities \eqref{estc1} and \eqref{estc2} and estimate
\eqref{EstimateK ParSum} valid at least for $p=0$.
\end{proof}

\begin{remark}
The inequalities \eqref{estc1} and \eqref{estc2} are of particular
importance when using representations \eqref{c(omega,x) via bessel} and
\eqref{s(omega,x) via bessel} for solving spectral problems for
\eqref{SL_omega} because they guarantee a uniform ($\omega$-independent)
approximation of eigendata (see \cite[Proposition 7.1]{KT AnalyticApprox})
which is illustrated by numerical experiments in Section \ref{Sect Num Exp}.
\end{remark}

\begin{remark}
In \cite{KT MMET 2012} another representation for solutions of
\eqref{SL_omega} was obtained in the form of Neumann series of Bessel
functions. It was based on the representation of the functions $\sin\omega x$
and $\cos\omega x$ as series in terms of Tchebyshev polynomials. To the
difference of the result of Theorem
\ref{Th Representation of solutions via Bessel} that idea did not lead to an
approximation of the solutions uniform with respect to $\omega$.
\end{remark}

Note that the numbers $j_k(z)$ for fixed $z$ rapidly decrease as $k\to\infty$, see, e.g., \cite[(9.1.62)]{Abramowitz}. Hence, the convergence rate of the series \eqref{c(omega,x) via bessel} and \eqref{s(omega,x) via bessel} for any fixed $\omega$ (and for bounded subsets $\Omega\subset\mathbb{C}$) is, in fact, exponential.
\begin{proposition}\label{Prop Exp Conv}
Let $x>0$ be fixed and $\omega\in\mathbb{C}$ satisfy $|\omega|\le \omega_0$. Suppose that $q\in C^{(p)}[0,b]$ for some $p\in \mathbb{N}_0$. Then for all $N>\max\{\omega_0 x,p\}/2$ the remainders of the series \eqref{c(omega,x) via bessel} and \eqref{s(omega,x) via bessel} satisfy
\begin{equation*}
    |c(\omega,x)-c_{2N}(\omega,x)|=|c(\omega,x)-c_{2N+1}(\omega,x)|\le \frac{c x^{p+2}e^{|\operatorname{Im}\omega|x}}{(2N+2)^{p+1/2}}\cdot\frac{1}{(2N+2)!}\cdot\left|\frac{\omega_0 x}{2}\right|^{2N+2},
\end{equation*}
and
\begin{equation*}
    |s(\omega,x)-s_{2N-1}(\omega,x)|=|s(\omega,x)-s_{2N}(\omega,x)|\le \frac{c x^{p+2}e^{|\operatorname{Im}\omega|x}}{(2N+1)^{p+1/2}}\cdot\frac{1}{(2N+1)!}\cdot\left|\frac{\omega_0 x}{2}\right|^{2N+1},
\end{equation*}
where $c$ is a constant depending on $q$ and $p$ only.
\end{proposition}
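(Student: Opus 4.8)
The plan is to estimate directly the tails of the Neumann series \eqref{c(omega,x) via bessel} and \eqref{s(omega,x) via bessel}, rather than going through the integral bound \eqref{Error cN2} used in Theorem \ref{Th Representation of solutions via Bessel}: the latter exposes only the polynomial-in-$N$ decay and hides the factorial gain carried by the Bessel factors. Since $[2N/2]=[(2N+1)/2]=N$ we have $c_{2N}=c_{2N+1}$ (and likewise $s_{2N-1}=s_{2N}$, explaining the equalities in the statement), so it suffices to bound
\[
c(\omega,x)-c_{2N}(\omega,x)=2\sum_{n=N+1}^{\infty}(-1)^{n}\beta_{2n}(x)j_{2n}(\omega x).
\]
Two ingredients are needed: a sharp decay estimate for the $\beta_{k}$ in terms of the smoothness of $q$, and the classical factorial decay of the spherical Bessel functions at a fixed argument. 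The hypothesis $N>\max\{\omega_{0}x,p\}/2$ is exactly what makes the resulting tail geometric and dominated by its first term.

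For the coefficients I would avoid invoking a separate theorem on the decay of Fourier--Legendre coefficients and instead extract the bound from Theorem \ref{Th Convergence rate Direct} by telescoping. Since $\frac{\beta_{k}(x)}{x}P_{k}\!\left(\frac{t}{x}\right)=K_{k}(x,t)-K_{k-1}(x,t)$ and $\max_{[-1,1]}|P_{k}|=P_{k}(1)=1$, taking the supremum over $|t|\le x$ and using the triangle inequality with \eqref{EstimateK ParSum} gives, for $k>p$,
\[
\left|\frac{\beta_{k}(x)}{x}\right|=\sup_{|t|\le x}\bigl|K_{k}(x,t)-K_{k-1}(x,t)\bigr|\le\sup_{|t|\le x}\bigl|K-K_{k}\bigr|+\sup_{|t|\le x}\bigl|K-K_{k-1}\bigr|\le\frac{\tilde c_{p}\,M\,x^{p+1}}{k^{p+1/2}},
\]
whence $|\beta_{k}(x)|\le \tilde c_{p}M x^{p+2}k^{-(p+1/2)}$, with $M$ as in \eqref{dtKBound} and $\tilde c_{p}$ depending only on $p$. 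This is the source of the factor $x^{p+2}/(2N+2)^{p+1/2}$ (resp. $(2N+1)^{-(p+1/2)}$) after specializing $k=2n\ge 2N+2$ (resp. $k=2n+1\ge 2N+1$).

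For the Bessel factors I would use the Poisson integral representation of $J_{m+1/2}$: combined with $j_{m}(z)=\sqrt{\pi/(2z)}\,J_{m+1/2}(z)$ and the elementary estimate $|\cos(zt)|\le e^{|\operatorname{Im}z|}$ on $t\in[-1,1]$ it yields $|j_{m}(z)|\le \frac{|z|^{m}}{(2m+1)!!}e^{|\operatorname{Im}z|}$. I flag that the cruder power-series estimate produces the wrong factor $e^{|z|^{2}/2}$, so the integral representation is essential to reach the stated $e^{|\operatorname{Im}\omega|x}$. Taking $z=\omega x$, $|\omega|\le\omega_{0}$, pulling $(2n)^{-(p+1/2)}\le(2N+2)^{-(p+1/2)}$ out of the sum, and noting that for $n\ge N+1>\omega_{0}x/2$ the ratio of consecutive terms $\frac{(\omega_{0}x)^{2}}{(4n+3)(4n+5)}$ stays below $1/4$, the tail is at most twice its leading term, giving
\[
|c(\omega,x)-c_{2N}(\omega,x)|\le\frac{4\tilde c_{p}M x^{p+2}e^{|\operatorname{Im}\omega|x}}{(2N+2)^{p+1/2}}\cdot\frac{(\omega_{0}x)^{2N+2}}{(4N+5)!!}.
\]
The last step is the elementary inequality $(4N+5)!!\ge 2^{2N+2}(2N+2)!$ (pair $2j+1>2j$), which turns the double factorial into $\frac{1}{(2N+2)!}\bigl|\frac{\omega_{0}x}{2}\bigr|^{2N+2}$ and gives the asserted bound with $c=4\tilde c_{p}M$ depending only on $q$ and $p$. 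The estimate for $s$ is identical with even indices replaced by odd ones (leading index $n=N$, $(4N+3)!!\ge 2^{2N+1}(2N+1)!$).

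I expect the only genuinely delicate point to be securing the sharp individual-coefficient decay $|\beta_{k}|\lesssim k^{-(p+1/2)}$ with a constant independent of $q$; the telescoping identity above reduces it to a direct corollary of Theorem \ref{Th Convergence rate Direct}, keeping the whole argument self-contained. Everything else (the Poisson bound and the double-factorial comparison) is routine once that estimate is in hand.
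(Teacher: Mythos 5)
Your proof is correct, and its skeleton coincides with the paper's: both arguments rest on the coefficient decay $|\beta_k(x)|\lesssim x^{p+2}k^{-(p+1/2)}$ multiplied by the factorial decay of the spherical Bessel functions, followed by domination of the tail by its first term. The genuine difference is in where the two key estimates come from. The paper observes via \eqref{int = beta} (restated as \eqref{beta FL}) that the $\beta_n(x)$ are, up to the factor $(2n+1)/2$, the Fourier--Legendre coefficients of $g(z)=xK(x,xz)\in C^{(p+1)}[-1,1]$, and cites Jackson's theorem together with the Wang--Xiang results to get \eqref{beta_n decay}; it then cites Abramowitz--Stegun (9.1.62) for the Bessel bound and leaves the final summation to the reader (``one easily obtains the announced estimates''). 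You instead derive the coefficient decay by telescoping, $\frac{\beta_k(x)}{x}P_k\bigl(\frac{t}{x}\bigr)=K_k(x,t)-K_{k-1}(x,t)$ with $\max_{[-1,1]}|P_k|=1$, directly from the paper's own Theorem \ref{Th Convergence rate Direct}, which keeps the argument self-contained within results already proved in the paper, at the cost of a slightly larger constant (merging the $k$ and $k-1$ partial-sum errors uses $(k-1)^{-(p+1/2)}\le 2^{p+1/2}k^{-(p+1/2)}$, absorbed into $\tilde c_p$); and you prove the Bessel bound $|j_m(z)|\le |z|^m e^{|\operatorname{Im}z|}/(2m+1)!!$ from the Poisson integral representation, which by $\Gamma(m+3/2)=(2m+1)!!\sqrt{\pi}/2^{m+1}$ is the same bound (in fact a factor $2$ sharper) as the one the paper cites. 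Your explicit tail computation --- the ratio of consecutive terms staying below $1/4$ precisely because $n\ge N+1>\omega_0 x/2$, and the conversion $(4N+5)!!\ge 2^{2N+2}(2N+2)!$, respectively $(4N+3)!!\ge 2^{2N+1}(2N+1)!$ --- supplies exactly the step the paper omits, and your index bookkeeping is sound: the hypothesis $2N>p$ guarantees that Theorem \ref{Th Convergence rate Direct} applies at every index $k$ and $k-1$ occurring in the tails, and $2N>\omega_0 x$ guarantees the geometric domination.
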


\begin{proof}
It follows from \eqref{int = beta} that
\begin{equation}\label{beta FL}
\beta_{n}(x)=\frac{2n+1}{2}\int_{-x}^{x}K(x,t)P_{n}\left(  \frac{t}{x}\right)
\,dt=\frac{2n+1}{2}\int_{-1}^{1}xK(x,xz)P_{n}(z)\,dz,
\end{equation}
i.e., for every $x>0$ the numbers $\beta_n(x)$ are the Fourier-Legendre coefficients of the function $g(z):=xK(x,xz)$. Since $q\in C^{(p)}[0,b]$, the function $g\in C^{(p+1)}[-1,1]$ and hence (see \cite[Corollary I to Theorem XIV]{Jackson} and \cite{WangXiang})
\begin{equation}\label{beta_n decay}
|\beta_n(x)|\le \frac{c_p V x^{p+2}}{n^{p+1/2}}, \qquad n>p,
\end{equation}
where $c_p>0$ is a universal constant and $V=\max_{x\le b, |t|\le x}|\partial_t^{p+1} K(x,t)|$. Combining the inequality \eqref{beta_n decay} with the inequality \cite[(9.1.62)]{Abramowitz}
\[
|j_n(z)|\le \sqrt{\pi}\left|\frac z2\right|^n \frac{e^{\operatorname{Im}z}}{\Gamma(n+3/2)}
\]
one easily obtains the announced estimates.
\end{proof}

\section{Representation for derivatives of
solutions\label{Sect Representation Derivatives}}

Differentiation of the equalities (\ref{c and s definition}) with respect to
$x$ and the Goursat conditions
\begin{equation}
\label{GoursatCond for K}K(x,x)=\frac h2 + \frac12\int_{0}^{x} q(s)\,ds,\qquad
K(x,-x)=\frac h2,\qquad0\le x\le b
\end{equation}
give us the relations
\begin{equation}
c^{\prime}(\omega,x) =-\omega\sin\omega x+\int_{-x}^{x}K_{1} (x,t)\cos\omega
t\,dt +\left(  h+\frac{1}{2}\int_{0}^{x}q(s)\,ds\right)  \cos\omega x
\label{c prime}%
\end{equation}
and
\begin{equation}
s^{\prime}(\omega,x) =\omega\cos\omega x+\int_{-x}^{x}K_{1}(x,t)\sin\omega
t\,dt +\frac{1}{2}\left(  \int_{0}^{x}q(s)\,ds\right)  \sin\omega x.
\label{s prime}%
\end{equation}
Here $K_{1}(x,t)$ is the derivative of $K(x,t)$ with respect to the first
variable.
To obtain a convenient representation for the kernel $K_{1}(x,t)$ we can apply
a procedure similar to that from Section
\ref{Sect Fourier-Legendre for the Kernel}. Let us seek $K_{1}(x,t)$ in the
form%
\begin{equation}
K_{1}(x,t)=%
{\displaystyle\sum\limits_{j=0}^{\infty}}
\frac{\gamma_{j}(x)}{x}P_{j}\left(  \frac{t}{x}\right)  .
\label{K1 via gammas}%
\end{equation}
Then analogously to (\ref{int = beta}) we have
\begin{equation}
\gamma_{n}(x)=\frac{2n+1}{2}\int_{-x}^{x}K_{1}(x,t)P_{n}\left(  \frac{t}%
{x}\right)  dt. \label{gamma n via K1}%
\end{equation}

Differentiation of (\ref{mapping property}) (and the use of
\eqref{GoursatCond for K}) gives us the relations%
\[
\int_{-x}^{x}K_{1}(x,t)t^{k}dt=\varphi_{k}^{\prime}(x)-kx^{k-1}-\frac{1}%
{2}\left(  \left(  1+(-1)^{k}\right)  h+\int_{0}^{x}q(s)\,ds\right)  x^{k}.
\]
Using (\ref{gamma n via K1}) we obtain then%
\begin{equation}
\label{gamma n}%
\begin{split}
\gamma_{n}(x)  &  =\frac{2n+1}{2}\sum_{k=0}^{n}\frac{l_{k,n}}{x^{k}}\int
_{-x}^{x}K_{1}(x,t)t^{k}dt\\
&  =\frac{2n+1}{2}\left(  \sum_{k=0}^{n}\frac{l_{k,n}\varphi_{k}^{\prime}%
(x)}{x^{k}}-\frac{n(n+1)}{2x}-\frac{1}{2}\int_{0}^{x}q(s)\,ds-\frac{h}%
{2}\left(  1+(-1)^{n}\right)  \right)
\end{split}
\end{equation}
where several elementary properties of Legendre polynomials (such as
$\sum_{k=0}^{n} l_{k,n}=P_{n}(1)=1$ and $\sum_{k=1}^{n} k l_{k,n}%
=P_{n}^{\prime}(1)=\frac{n(n+1)}2$) were employed.

Finally, if $f$ does not have zeros on $[0,b]$ (such $f$ always exists
\cite[Remark 5]{KrPorter2010} and \cite{Camporesi et al 2011}), the
derivatives $\varphi_{k}^{\prime}$ can be calculated by the formula%
\[
\varphi_{k}^{\prime}=k\psi_{k-1}+\frac{f^{\prime}}{f}\varphi_{k}%
\]
which follows directly from Definition \ref{Def Formal powers phik and psik}.
Otherwise it is convenient to use the formulas from \cite{KrTNewSPPS}.

In general, $K_{1}$ is a continuous function with respect to both variables
however we cannot guarantee additional smoothness of $K_{1}$ as function of
$t$ (moreover, similarly to Lemma
\ref{Lemma diff K and q} it is possible to show that belonging of
$K_{1}(x,\cdot)$ to some class $\operatorname{Lip}\alpha$ implies
$q\in\operatorname{Lip}\alpha$). And it is known that the Fourier-Legendre
series of a continuous function may not converge to the function even
pointwise. Nevertheless, the series always converges to the function in the
$L_{2}$ norm.

Denote by $K_{1,N}$ the partial sum of the series \eqref{K1 via gammas},
\[
K_{1,N}(x,t)=\sum_{j=0}^{N} \frac{\gamma_{j}(x)}{x}P_{j}\left(  \frac{t}%
{x}\right)  .
\]
Below we prove some estimates for the remainder $K_{1}-K_{1,N}$ with explicit
dependence on $x$.

Let $q\in C^{(p)}[0,b]$. Then $K_{1}$ is $p$ times continuously differentiable
with respect to $t$ which justifies the existence of the following constants.
Define
\[
M_{p}:=\max_{0\le x\le b,\ |t|\le x}\bigl| \partial^{p}_{t} K_{1}(x,t)\bigr|.
\]
and
\[
k_{0}(\delta):=\sup_{0\le \tau\le\delta} \sup_{\substack{0\le x\le b\\t_{1}%
,t_{2}\in[-x,x]:\, |t_{1}-t_{2}|\le \tau}}|K_{1}(x,t_{1})-K_{1}(x,t_{2})|,
\]
Since the kernel $K_{1}$ is a continuous function in the domain $0\le x\le b$,
$|t|\le x$, we have $k_{0}(\delta)\to0$, $\delta\to0$.

\begin{proposition}
\label{Pr Estimates K1} Suppose that $q\in C^{(p)}[0,b]$. Then for $p=0$,
$N\in\mathbb{N}$ and each $x\in(0,b]$
\begin{equation}
\Vert K_{1}(x,\cdot)-K_{1,N}(x,\cdot)\Vert_{L_{2}[-x,x]}\leq c_{0}\sqrt
{x}\cdot k_{0}\left(  \frac{x}{n}\right)  \leq c_{0}\sqrt{x}\cdot k_{0}\left(
\frac{b}{n}\right)  =o(1),\ N\rightarrow\infty, \label{EstK1 L2}%
\end{equation}
and for $p\geq1$, $0<x\leq b$, $|t|\leq x$
\begin{equation}
\bigl|K_{1}(x,t)-K_{1,N}(x,t)\bigr|\leq\frac{c_{p}M_{p}x^{p}}{N^{p-1/2}},\qquad N>p,
\label{EstK1 Uniform}%
\end{equation}
where the constants $c_{0}$ and $c_{p}$ do not depend on $q$ and $N$.
\end{proposition}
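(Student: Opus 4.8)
The plan is to transfer known Fourier--Legendre approximation estimates for the function $g(z):=xK_{1}(x,xz)$ on $[-1,1]$ back to estimates for $K_{1}(x,\cdot)$ on $[-x,x]$, exactly mirroring the argument already used in the proof of Theorem \ref{Th Convergence rate Direct}. The central observation is that $\sum_{j=0}^{N}\frac{\gamma_{j}(x)}{x}P_{j}(z)$ is precisely the $N$-th partial sum of the Fourier--Legendre series of $g$, since by \eqref{gamma n via K1} and the change of variable $t=xz$ the numbers $\gamma_{n}(x)$ are (up to the factor $x$) the Fourier--Legendre coefficients of $g$. Under the rescaling $t=xz$ one has $g^{(m)}(z)=x^{m}\left.\partial_{t}^{m}K_{1}(x,t)\right|_{t=xz}$, so each $t$-derivative of $K_{1}$ contributes a factor of $x$ to the corresponding derivative of $g$; this is the source of the powers of $x$ appearing in \eqref{EstK1 L2} and \eqref{EstK1 Uniform}.

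For the smooth case $p\geq 1$ the plan is to apply the uniform remainder estimate \eqref{EstimateSuetin} (quoted from \cite{Suetin} and \cite{Timan}) to $g$, but now with $p$ in place of $p+1$: since $q\in C^{(p)}[0,b]$ guarantees only that $K_{1}$ is $p$ times continuously differentiable in $t$, the relevant smoothness class for $g$ is $C^{(p)}[-1,1]$ rather than $C^{(p+1)}$. Hence the exponent in the denominator drops from $N^{p+1/2}$ (as in Theorem \ref{Th Convergence rate Direct}) to $N^{p-1/2}$, and the bound on $\max_{[-1,1]}|g^{(p)}(z)|\leq x^{p}M_{p}$ supplies the factor $x^{p}$. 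Putting these together yields \eqref{EstK1 Uniform} directly, with $c_{p}$ inheriting the universal constant from the approximation theorem.

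The case $p=0$ is the genuinely different one and the main obstacle, because a merely continuous $K_{1}(x,\cdot)$ need not have a pointwise (let alone uniform) convergent Fourier--Legendre series; this is why \eqref{EstK1 L2} is stated in the $L_{2}$ norm and in terms of the modulus of continuity $k_{0}$. The plan here is to invoke an $L_{2}$-approximation estimate for Fourier--Legendre partial sums of a continuous function, expressed through its modulus of continuity: the best $L_{2}[-1,1]$ approximation of $g$ by polynomials of degree $N$ is controlled by the modulus of continuity of $g$ at scale $1/N$. Since $g(z)=xK_{1}(x,xz)$, a displacement $|z_{1}-z_{2}|\leq 1/N$ corresponds to $|t_{1}-t_{2}|\leq x/N$, so the modulus of continuity of $g$ at scale $1/N$ is $x$ times the modulus $k_{0}(x/N)$ of $K_{1}(x,\cdot)$. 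Converting the $L_{2}[-1,1]$ norm back to the $L_{2}[-x,x]$ norm introduces a factor $\sqrt{x}$ from the Jacobian, giving the leading bound $c_{0}\sqrt{x}\,k_{0}(x/N)$; the monotonicity $k_{0}(x/N)\leq k_{0}(b/N)$ follows since $x\leq b$ and $k_{0}$ is nondecreasing, and $k_{0}(b/N)\to 0$ by the uniform continuity of $K_{1}$ on the compact region $\{0\leq x\leq b,\ |t|\leq x\}$, yielding the $o(1)$ conclusion as $N\to\infty$.
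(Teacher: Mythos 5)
Your proposal is correct and takes essentially the same route as the paper: rescale to a function $g$ on $[-1,1]$, apply the uniform Suetin/Timan estimate \eqref{EstimateSuetin} with $C^{(p)}$ (rather than $C^{(p+1)}$) smoothness to get \eqref{EstK1 Uniform}, and for $p=0$ apply an $L_{2}$ Jackson-type bound through the modulus of continuity (the paper uses DeVore--Lorentz, Theorem 6.2, with $r=0$), converting norms back to $[-x,x]$ via the $\sqrt{x}$ Jacobian. The only blemish is a normalization slip: you define $g(z):=xK_{1}(x,xz)$ but then use the relations $g^{(m)}(z)=x^{m}\partial_{t}^{m}K_{1}(x,t)\big|_{t=xz}$ and ``$\sum_{j=0}^{N}\frac{\gamma_{j}(x)}{x}P_{j}(z)$ is the partial sum of the series of $g$,'' which correspond to $g(z):=K_{1}(x,xz)$; either convention yields the stated bounds, but the bookkeeping must be done consistently.
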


\begin{proof}
Theorem 6.2 from \cite{DeVoreLorentz} states that for a function $f\in
W_{2}^{r}[-1,1]$, $r\in\mathbb{N}_{0}$, the error $E_{n}(f)_{2}$ of the best
approximation of $f$ by polynomials of degree not exceeding $n$ (which in the
case of $L_{2}$ norm coincides with the partial sum of the Fourier-Legendre
series of $f$) satisfies
\begin{equation}
\label{Direct DeVore}E_{n}(f)_{2}\le\tilde c_{r} n^{-r} \omega(f^{(r)}%
,1/n)_{2},
\end{equation}
where the constant $\tilde c_{r}$ does not depend on $f$ and $\omega$ is the
modulus of continuity defined for a function $g\in L_{2}[a,b]$ as
\[
\omega(g,t)_{2}:=\sup_{0\le \tau\le t}\|g(x+\tau)-g(x)\|_{L_{2}[a,b-\tau]}.
\]

For a fixed $x>0$ consider a function $g(y):=K_{1}(x,xy)$, $-1\le y\le1$.
Then
\begin{equation}
\label{omega g}\omega\left(  g,\frac1n\right)  _{2} = \sup_{0\le \tau\le
1/n}\biggl(\int_{-1}^{1-\tau}\bigl|g(y+\tau)-g(y)\bigr|^{2} \,dy\biggr)^{1/2}\le
\sup_{0\le \tau\le1/n}\sqrt{2}k_{0}(x\tau)\le\sqrt{2}k_{0}\left(  \frac xn\right)  .
\end{equation}
Note that
\[
\|K_{1}(x,\cdot)-K_{1,N}(x,\cdot)\|_{L_{2}[-x,x]} = \sqrt{x}\|g-g_{N}%
\|_{L_{2}[-1,1]}=\sqrt{x} E_{n}(g)_{2},
\]
where $g_{N}$ is the partial sum of the Fourier-Legendre series for the
function $g$. Combining the last equality with \eqref{Direct DeVore} for $r=0$
and with \eqref{omega g} we obtain \eqref{EstK1 L2}.

The second inequality \eqref{EstK1 Uniform} can be obtained similarly to the
proof of Theorem \ref{Th Convergence rate Direct}.
\end{proof}

\begin{proposition}
The derivatives of the solutions $c(\omega,x)$ and $s(\omega,x)$ of equation
\eqref{SL_omega} admit the following representations
\begin{equation}
c^{\prime}(\omega,x)=-\omega\sin\omega x+\left(  h+\frac{1}{2}\int_{0}%
^{x}q(s)\,ds\right)  \cos\omega x+2\sum_{n=0}^{\infty}(-1)^{n}\gamma
_{2n}(x)j_{2n}(\omega x)\label{c prime (omega)}%
\end{equation}
and
\begin{equation}
s^{\prime}(\omega,x)=\omega\cos\omega x+\frac{1}{2}\left(  \int_{0}%
^{x}q(s)\,ds\right)  \sin\omega x+2\sum_{n=0}^{\infty}(-1)^{n}\gamma
_{2n+1}(x)j_{2n+1}(\omega x)\label{s prime (omega)}%
\end{equation}
where $\gamma_{k}$ are defined by \eqref{gamma n}. The series converge
uniformly for any $x$ from $[0,b]$ and converge uniformly with respect to
$\omega$ on any compact subset of the complex plane of the variable $\omega$.
\end{proposition}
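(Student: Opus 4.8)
The plan is to mirror the structure of the proof of Theorem~\ref{Th Representation of solutions via Bessel}, substituting the Fourier--Legendre expansion \eqref{K1 via gammas} of $K_1$ into the integral representations \eqref{c prime} and \eqref{s prime} for the derivatives. First I would take \eqref{c prime}, insert $K_1(x,t)=\sum_{j=0}^{\infty}\frac{\gamma_j(x)}{x}P_j\!\left(\frac{t}{x}\right)$ into the integral $\int_{-x}^{x}K_1(x,t)\cos\omega t\,dt$, interchange summation and integration, and apply the change of variable $y:=t/x$ exactly as in the earlier proof. This turns each term into $\gamma_j(x)\int_{-1}^{1}P_j(y)\cos(\omega x y)\,dy$, and formula~2.17.7 from \cite{Prudnikov} (the same one used before) evaluates these integrals: the even-index Legendre polynomials paired with $\cos$ produce the spherical Bessel functions $j_{2n}(\omega x)$, while the odd-index terms integrate to zero by parity. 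An identical computation applied to \eqref{s prime} yields the $j_{2n+1}(\omega x)$ terms for $s'(\omega,x)$. Collecting the non-integral summands already present in \eqref{c prime} and \eqref{s prime} gives precisely the leading terms $-\omega\sin\omega x+\bigl(h+\frac12\int_0^x q\bigr)\cos\omega x$ and $\omega\cos\omega x+\frac12\bigl(\int_0^x q\bigr)\sin\omega x$ in \eqref{c prime (omega)} and \eqref{s prime (omega)}.

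\textbf{Justifying the interchange and establishing convergence.}

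The subtle point is that, unlike $K$, the kernel $K_1(x,\cdot)$ need not have a pointwise convergent Fourier--Legendre series (as noted in the text before Proposition~\ref{Pr Estimates K1}); convergence is only guaranteed in $L_2[-x,x]$. Consequently, I would justify the term-by-term integration not via uniform convergence but via the $L_2$ convergence established in \eqref{EstK1 L2}: since $\cos\omega t$ and $\sin\omega t$ are bounded on $[-x,x]$ for each fixed $\omega$, and hence lie in $L_2[-x,x]$, the Cauchy--Schwarz inequality gives
\[
\left|\int_{-x}^{x}\bigl(K_1(x,t)-K_{1,N}(x,t)\bigr)\cos\omega t\,dt\right|
\le \|K_1(x,\cdot)-K_{1,N}(x,\cdot)\|_{L_2[-x,x]}\,\|\cos\omega t\|_{L_2[-x,x]}\to 0,
\]
so the partial sums of the series converge to the integral. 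This is the step I expect to require the most care, because it is where the weaker regularity of $K_1$ forces a switch from the uniform-convergence argument of the previous theorem to an $L_2$-based one.

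\textbf{Uniform convergence in $x$ and in $\omega$.}

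For the uniform convergence with respect to $\omega$ on compact subsets of $\mathbb{C}$, I would argue exactly as in Theorem~\ref{Th Representation of solutions via Bessel}: for each fixed $x$ the series \eqref{c prime (omega)} and \eqref{s prime (omega)} are Neumann series of Bessel functions, and the function $\omega\mapsto c'(\omega,x)$ (minus its elementary leading terms, suitably normalized) is entire in $\omega$ by the SPPS representation, so the radius of convergence of the Neumann series is infinite \cite[pp.~524--526]{Watson}, yielding uniform convergence on every compact $\omega$-set. For uniformity in $x$ on $[0,b]$, I would bound the remainder of \eqref{c prime (omega)} by $\bigl|\int_{-x}^{x}(K_1-K_{1,N})\cos\omega t\,dt\bigr|$ as above and invoke the estimates \eqref{EstK1 L2} (for $p=0$) or \eqref{EstK1 Uniform} (for $p\ge1$) from Proposition~\ref{Pr Estimates K1}, together with the elementary bound $\|\cos\omega t\|_{L_2[-x,x]}\le\sqrt{2x}$, to conclude that the tail is controlled uniformly in $x\in[0,b]$. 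The same reasoning applies verbatim to the sine series for $s'(\omega,x)$, completing the proof.
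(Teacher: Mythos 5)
Your proposal is correct and follows essentially the same route as the paper's own (very terse) proof: substitution of \eqref{K1 via gammas} into \eqref{c prime} and \eqref{s prime} with the same Prudnikov-formula computation as in Theorem \ref{Th Representation of solutions via Bessel}, uniform convergence in $x$ via Proposition \ref{Pr Estimates K1} combined with the Cauchy--Schwarz inequality, and uniform convergence in $\omega$ via the Neumann-series/Watson argument. Your identification of the weaker regularity of $K_{1}$ (forcing the $L_{2}$-based justification of term-by-term integration) as the delicate step is exactly the point the paper's citation of Cauchy--Schwarz is meant to address; only note that your bound $\|\cos\omega t\|_{L_{2}[-x,x]}\le\sqrt{2x}$ holds for real $\omega$ and should carry an extra factor $\cosh(|\operatorname{Im}\omega|\,b)$ on a strip, a trivial adjustment.
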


\begin{proof}
The proof of these representations follows from (\ref{c prime}) and
(\ref{s prime}) by substitution of (\ref{K1 via gammas}) and similar procedure
as that from the proof of Theorem
\ref{Th Representation of solutions via Bessel}. The uniform convergence of
the series with respect to $x$ follows from Proposition \ref{Pr Estimates K1}
and Cauchy-Schwarz inequality. The uniform convergence with respect to
$\omega$ is proved analogously to Theorem
\ref{Th Representation of solutions via Bessel}.
\end{proof}

\begin{remark}
\label{Rem Estimates for derivatives}Consider the approximations of the
derivatives of the solutions
\begin{equation}
\overset{\circ}{c}_{N}(\omega,x)=-\omega\sin\omega x+\left(  h+\frac{1}{2}%
\int_{0}^{x}q(s)\,ds\right)  \cos\omega x+2\sum_{n=0}^{[N/2]} (-1)^{n}\gamma
_{2n}(x)j_{2n}(\omega x) \label{cN prime}%
\end{equation}
and
\begin{equation}
\overset{\circ}{s}_{N}(\omega,x)=\omega\cos\omega x+\frac{1}{2}\left(
\int_{0}^{x}q(s)\,ds\right)  \sin\omega x+2\sum_{n=0}^{[(N-1)/2]} (-1)^{n}%
\gamma_{2n+1}(x)j_{2n+1}(\omega x). \label{sN prime}%
\end{equation}
Let $\varepsilon_N$ be a sufficiently small nonnegative function such that
$\left\vert K_{1}(x,t)-K_{1,N}(x,t)\right\vert \leq\varepsilon_N(x)$. Then for
the differences between $c^{\prime}(\omega,x)$ and $\overset{\circ}{c}%
_{N}(\omega,x)$, as well as between $s^{\prime}(\omega,x)$ and $\overset
{\circ}{s}_{N}(\omega,x)$, we obtain exactly the same estimates as
\eqref{estc1} and \eqref{estc2}. Their proof is analogous.
\end{remark}

\begin{remark}
The representations proposed for solutions and their derivatives are not
limited to continuous potentials $q$. Consideration of the convergence of the
series \eqref{K via beta} and \eqref{K1 via gammas} in the $L_{2}$ norm is
possible for wider classes of potentials, up to $q\in W_{2}^{-1}(0,b)$, see
\cite{HrynivMykytyuk2003, HrynivMykytyuk2004} for the definitions and
construction of transmutation operators in that case.
\end{remark}

\section{A sequence of equations for $\beta_{k}$ and $\gamma_{k}%
$\label{Sect Sequence Equations}}

In this section we develop another recursive procedure for calculating the
functions $\beta_{k}$ and $\gamma_{k}$. It is based on the substitution of the
solutions $c(\omega,x)$ and $s(\omega,x)$ in the form
(\ref{c(omega,x) via bessel}) and (\ref{s(omega,x) via bessel}) into equation
(\ref{SL_omega}).

One of the advantages of this procedure is the improved stability for
numerical calculations. For example, the first formula from Definition
\ref{Def beta} includes the term
\[
\frac{p_{0k}}{p_{kk}}\beta_{0}=\frac{(2k+1)!}{k!(k+1)!2^{k}}\beta_{0}\sim
\frac{2^{k+1}}{\sqrt{\pi k}}\beta_{0},
\]
where we applied formulas from \cite[Chap. 4]{Suetin} and Stirling's formula.
Hence even the smallest error in the computation of $\beta_{0}$ leads to
exponentially growing errors in subsequent $\beta_{k}$'s. Errors in other
previous $\beta_{m}$'s are get multiplied as well. On the other hand, for any $x$ the
functions $\beta_{k}(x)/x$ are the Fourier-Legendre coefficients of a smooth
function $K(x,xt)$ and hence tend to
zero as $k\rightarrow\infty$ (see also Remark \ref{Rem Precision Control}). This explains why using Definition
\ref{Def beta} only a limited number of the functions $\beta_{k}$ can be
evaluated numerically. The procedure developed in this section presents
different recurrent formulas which allowed more functions $\beta_{k}$ to be
evaluated numerically in all experiments performed, see Example
\ref{Example Exp} for details.

Let us start with the solution $c(\omega,x)$ (see Theorem
\ref{Th Representation of solutions via Bessel}). Analogous formulas for
$s(\omega,x)$ are given below. We proceed formally and at the end of this
section justify for the case $q\in C^{2}[0,b]$ the possibility to
differentiate termwise all the series and explain why the final formulas
remain valid for the general case. Differentiating the solution $c(\omega,x)$ twice, using the formulas $j_{k}^{\prime}(z)=-j_{k+1}(z)+\frac{k}{z}j_{k}(z)$,
$k=0,1,\ldots$ (for the first derivative) and $j_{k}^{\prime}(z)=j_{k-1}(z)-\frac{k+1}{z}j_{k}(z)$,
$k=1,2,\ldots$ (for the second derivative) and
substituting into (\ref{SL_omega}) leads us to the equality
\begin{multline*}
\label{vspom 1}2\sum_{n=0}^{\infty}(-1)^{n}\left[ j_{2n}(\omega x)\left(
\beta_{2n}^{\prime\prime}(x)+\frac{4n}{x}\beta_{2n}^{\prime}(x)+\frac
{2n(2n-1)}{x^{2}}\beta_{2n}(x)\right)  +j_{2n+1}(\omega x)\left(
-2\omega\beta_{2n}^{\prime}(x)+\frac{2\omega}{x}\beta_{2n}(x)\right)  \right]
\\
=q(x)\left(  \cos\omega x+2\sum_{n=0}^{\infty}(-1)^{n}\beta_{2n}%
(x)j_{2n}(\omega x)\right)  .
\end{multline*}
Combining the terms containing $j_{0}(\omega x)$ and using $q(x)=2(\beta
_{0}^{\prime\prime}(x)-q(x)\beta_{0}(x))$ we obtain
\begin{equation}
\label{vspom 1a}%
\begin{split}
\bigl( \cos\omega x & -j_{0}(\omega x)\bigr)\bigl(\beta_{0}^{\prime\prime
}(x)-q(x)\beta_{0}(x)\bigr) = -2\omega\sum_{n=0}^{\infty}(-1)^{n}j_{2n+1}(\omega x)\left(  \beta
_{2n}^{\prime}(x)-\frac{1}{x}\beta_{2n}(x)\right)
\\
&  +\sum_{n=1}^{\infty}(-1)^{n}j_{2n}(\omega x)\left(  \beta_{2n}%
^{\prime\prime}(x)+\frac{4n}{x}\beta_{2n}^{\prime}(x)+\frac{2n(2n-1)}{x^{2}%
}\beta_{2n}(x)-q(x)\beta_{2n}(x)\right).\\
\end{split}
\end{equation}
The second series can be expressed in the terms of odd index spherical Bessel functions using the equality
\begin{equation}
\label{recusrive Bessel}j_{2n}(\omega x)=\frac{\omega x}{4n+1}\bigl( j_{2n-1}%
(\omega x)+j_{2n+1}(\omega x)\bigr).
\end{equation}
Note additionally that
\begin{equation}
\label{bessel1}\cos\omega x-j_{0}(\omega x) = \omega x\cdot j_{1}(\omega x).
\end{equation}
Applying \eqref{bessel1} and
\eqref{recusrive Bessel} to \eqref{vspom 1a} and dividing by $\omega x$ one
can see that \eqref{vspom 1a} can be written as
\begin{equation}
\sum_{n=1}^{\infty}\alpha_{n}(x)j_{2n-1}(\omega x) = 0,\label{vspom 1b}%
\end{equation}
where
\begin{equation*}
\label{alpha n}%
\begin{split}
\alpha_{n}(x)  &  =(-1)^{n}\left[  \frac{1}{4n+1}\left(  \beta_{2n}%
^{\prime\prime}(x)+\frac{4n}{x}\beta_{2n}^{\prime}(x)+\left(  \frac
{2n(2n-1)}{x^{2}}-q(x)\right)  \beta_{2n}(x)\right)  \right. \\
&  -\frac{1}{4n-3}\left(  \beta_{2(n-1)}^{\prime\prime}(x)+\frac{4\left(
n-1\right)  }{x}\beta_{2(n-1)}^{\prime}(x)+\left(  \frac{2(n-1)\left(
2(n-1)-1\right)  }{x^{2}}-q(x)\right)  \beta_{2(n-1)} (x)\right) \\
&  +2\left. \left(  \frac{1}{x}\beta_{2(n-1)}^{\prime}(x)-\frac{1}{x^{2}}%
\beta_{2(n-1)}(x)\right)  \right]  .
\end{split}
\end{equation*}

Multiplying the equality \eqref{vspom 1b} by $j_{2m-1}(\omega x)$,
$m=1,2,\ldots$, integrating with respect to $\omega$ from 0 to $\infty$ and
using the integral
\begin{equation}
\label{orthogonal Bessels}\int_{0}^{\infty}j_{\nu+2n}(y)j_{\nu+2m}(y)\,dy=0
\end{equation}
for $n,m\in\mathbb{Z}$ with $n\ne m$ and $m+n+\nu>-1/2$ (c.f.,
\cite[Formula 11.4.6]{Abramowitz}) we obtain that all coefficients $\alpha
_{n}$ are identically equal to zero.

In order to simplify the equations $\alpha_{n}(x)=0$, $n=1,2,\ldots$, consider
the functions
\[
\sigma_{2n}(x):=x^{2n}\beta_{2n}(x),\quad n=0,1,\ldots.
\]
Then equations $\alpha_{n}(x)=0$ take the form
\begin{equation}%
\begin{split}
\sigma_{2n}^{\prime\prime}(x)-q(x)\sigma_{2n}(x) &  =\frac{4n+1}{4n-3}%
x^{2}\left(  \sigma_{2\left(  n-1\right)  }^{\prime\prime}(x)-q(x)\sigma
_{2\left(  n-1\right)  }(x)\right)  \\
&  \quad-2\left(  4n+1\right)  x\left(  \sigma_{2(n-1)}^{\prime}%
(x)-\frac{2n-1}{x}\sigma_{2(n-1)}(x)\right)  .
\end{split}
\label{sequence of equations for sigmas 2n}%
\end{equation}

Equations similar to (\ref{sequence of equations for sigmas 2n}) can be
derived also for the odd coefficients. Calculation similar to that for
$c(\omega,x)$ leads to the equality
\begin{align*}
\frac{q(x)}{2}\frac{\sin\omega x}{\omega x} &  =\sum_{n=0}^{\infty}%
(-1)^{n}\left[  \frac{j_{2n}(\omega x)}{4n+3}\left(  \beta_{2n+1}%
^{\prime\prime}(x)+\frac{2(2n+1)}{x}\beta_{2n+1}^{\prime}(x)+\left(
\frac{2n(2n+1)}{x^{2}}-q(x)\right)  \beta_{2n+1}(x)\right)  \right.  \\
&  \quad+\frac{j_{2n+2}(\omega x)}{4n+3}\left(  \beta_{2n+1}^{\prime\prime
}(x)+\frac{2(2n+1)}{x}\beta_{2n+1}^{\prime}(x)+\left(  \frac{2n(2n+1)}{x^{2}%
}-q(x)\right)  \beta_{2n+1}(x)\right)  \\
&  \quad-\left.  2j_{2n+2}(\omega x)\left(  \frac{1}{x}\beta_{2n+1}^{\prime
}(x)-\frac{1}{x^{2}}\beta_{2n+1}(x)\right)  \right]  .
\end{align*}
Noting that $\sin(\omega x)/\omega x=j_{0}(\omega x)$ one can see that the
last equality is of the form
\[
\sum_{n=0}^{\infty}\alpha_{n}(x)j_{2n}(\omega x)=0,
\]
where
\begin{align*}
\alpha_{n}(x) &  =(-1)^{n}\left[  \frac{1}{4n+3}\left(  \beta_{2n+1}%
^{\prime\prime}(x)+\frac{2(2n+1)}{x}\beta_{2n+1}^{\prime}(x)+\left(
\frac{2n(2n+1)}{x^{2}}-q(x)\right)  \beta_{2n+1}(x)\right)  \right.  \\
&  \quad-\frac{1}{4n-1}\left(  \beta_{2n-1}^{\prime\prime}(x)+\frac
{2(2n-1)}{x}\beta_{2n-1}^{\prime}(x)+\left(  \frac{\left(  2n-2\right)
(2n-1)}{x^{2}}-q(x)\right)  \beta_{2n-1}(x)\right)  \\
&  \quad+2\left.  \left(  \frac{1}{x}\beta_{2n-1}^{\prime}(x)-\frac{1}{x^{2}%
}\beta_{2n-1}(x)\right)  \right]
\end{align*}
and we have taken $\beta_{-1}:=1/2$ to simplify notations for $n=0$. Applying
the integral \eqref{orthogonal Bessels} one obtains the relations $\alpha
_{n}\equiv0$ for $n=0,1,2,\ldots$. Introducing $\sigma_{2n+1}=x^{2n+1}%
\beta_{2n+1}(x)$ we rewrite them in the form
\[
\frac{1}{4n+3}\left(  \sigma_{2n+1}^{\prime\prime}(x)-q(x)\sigma
_{2n+1}(x)\right)  =\frac{x^{2}}{4n-1}\left(  \sigma_{2n-1}^{\prime\prime
}(x)-q(x)\sigma_{2n-1}(x)\right)  -2x\left(  \sigma_{2n-1}^{\prime}%
(x)-\frac{2n}{x}\sigma_{2n-1}(x)\right)  .
\]
Combining the even with the odd cases we obtain the following sequence of
equations to find coefficients $\beta_{n}(x)=x^{-n}\sigma_{n}(x)$ for the
representations of solutions
\begin{equation}
\frac{1}{2n+1}\left(  \sigma_{n}^{\prime\prime}(x)-q(x)\sigma_{n}(x)\right)
=\frac{x^{2}}{2n-3}\left(  \sigma_{n-2}^{\prime\prime}(x)-q(x)\sigma
_{n-2}(x)\right)  -2x\left(  \sigma_{n-2}^{\prime}(x)-\frac{n-1}{x}%
\sigma_{n-2}(x)\right)  .\label{sequence of equations for sigma}%
\end{equation}

To obtain the equations for the coefficients $\gamma_{k}$ one has to compare
\eqref{c prime (omega)} and \eqref{s prime (omega)}
with the derivatives of (\ref{c(omega,x) via bessel}) and (\ref{s(omega,x) via bessel}) and proceed similarly to the previous cases. As a result, the following relations can be obtained
\begin{align}
\gamma_{0}(x) &  =\beta_{0}^{\prime}(x)-\frac{h}{2}-\frac{1}{4}\int_{0}%
^{x}q(s)\,ds,\nonumber\\
\gamma_1(x)&=\frac{1}{x}\beta_1(x)+\beta_1'(x) -\frac 34\int_0^x q(s)\,ds,\nonumber\\
\gamma_{n}(x) &  =\frac{n}{x}\beta_{n}(x)+\beta_{n}^{\prime}(x)+\frac
{2n+1}{2n-3}\left(  \gamma_{n-2}(x)-\beta_{n-2}^{\prime}(x)+\frac{n-1}{x}%
\beta_{n-2}(x)\right)  ,\qquad n=2,3,\ldots
\label{sequence of equations for gamma}%
\end{align}
Note that the last formula holds for $n=1$ as well if we define $\gamma_{-1}:=\frac 14\int_0^x q(s)\,ds$.
Introducing notations $\tau_{n}(x):=x^{n}\gamma_{n}(x)$ we can rewrite
equation \eqref{sequence of equations for gamma} in terms of the functions
$\sigma_{n}$.
\begin{equation}
\tau_{n}(x)=\sigma_{n}^{\prime}(x)+\frac{2n+1}{2n-3}x^{2}\left(  \tau
_{n-2}(x)-\sigma_{n-2}^{\prime}(x)\right)  +(2n+1)x\sigma_{n-2}%
(x).\label{sequence of equations for tau}%
\end{equation}

Hence the construction of the functions $\beta_{n}$ and $\gamma_{n}$ for
$n=1,2,\ldots$ reduces to solution of a recurrent sequence of inhomogeneous
Schrödinger equations (\ref{sequence of equations for sigma}) having the
form
\begin{equation}
\label{NonHom Schrodinger}\sigma_{n}^{\prime\prime}(x)-q(x)\sigma_{n}%
(x)=h_{n}(x)
\end{equation}
with the initial conditions $\sigma_{n}(0)=\sigma_{n}^{\prime}(0)=0$.

Thus, the following statement is proved.

\begin{proposition}
\label{Prop Beta and Gamma recurrent} The functions $\sigma_{n}(x):=x^{n}%
\beta_{n}(x)$ where $\beta_{n}$ are the coefficients from
\eqref{c(omega,x) via bessel} and \eqref{s(omega,x) via bessel} satisfy the
sequence of recurrent differential equations
\eqref{sequence of equations for sigma} for $n=1,2,\ldots$ with the initial
conditions $\sigma_{n}(0)=\sigma_{n}^{\prime}(0)=0$ and with the first
functions given by $\beta_{-1}:=1/2$ and $\beta_{0}=(f-1)/2$. The functions
$\tau_{n}(x):=x^{n} \gamma_{n}(x)$ where $\gamma_{n}$ are the coefficients
from \eqref{c prime (omega)} and \eqref{s prime (omega)} are given by the
sequence of recurrent relations \eqref{sequence of equations for tau} with the
first functions given by $\gamma_{-1}:=\frac 1{4}\int_0^x q(s)\,ds$ and $\gamma_{0}=\frac{f^{\prime}-h}2 -
\frac14\int_{0}^{x} q(s)\,ds$.
\end{proposition}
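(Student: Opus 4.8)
The plan is to assemble the proposition from the formal computation already carried out in this section, and to supply the analytic justifications that were postponed. The starting point is that $c(\omega,x)$ and $s(\omega,x)$ are by construction solutions of \eqref{SL_omega}. Substituting their Neumann--Bessel representations \eqref{c(omega,x) via bessel} and \eqref{s(omega,x) via bessel} into the equation, differentiating termwise twice by means of the recurrences $j_k'(z)=-j_{k+1}(z)+\frac kz j_k(z)$ and $j_k'(z)=j_{k-1}(z)-\frac{k+1}z j_k(z)$, and then using the identities \eqref{bessel1} and \eqref{recusrive Bessel}, collapses the resulting identity to \eqref{vspom 1b}, i.e.\ $\sum_{n\ge1}\alpha_n(x)j_{2n-1}(\omega x)=0$ for all $\omega$, with $\alpha_n$ the explicit combination of $\beta_{2n},\beta_{2n}',\beta_{2n}''$ displayed above; the computation for $s(\omega,x)$ produces the analogous identity $\sum_{n\ge0}\alpha_n(x)j_{2n}(\omega x)=0$ with the odd-index coefficients.

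First I would justify the termwise double differentiation. For $q\in C^2[0,b]$ this is legitimate because the uniform estimate \eqref{EstimateK ParSum} for $K$ and its counterpart \eqref{EstK1 Uniform} for the kernel of the $x$-derivative control the differentiated series on compact $\omega$-sets; the general continuous case then follows by approximating $q$ in $C[0,b]$ by smooth potentials and passing to the limit in the final recurrences, whose coefficients depend continuously on $q$. Second, to pass from the single functional identity to the scalar relations $\alpha_n\equiv0$ I would multiply \eqref{vspom 1b} by $j_{2m-1}(\omega x)$, integrate in $\omega$ over $(0,\infty)$, and invoke the orthogonality relation \eqref{orthogonal Bessels}: the off-diagonal integrals vanish while the diagonal one $\int_0^\infty j_{2m-1}(y)^2\,dy$ is strictly positive, so $\alpha_m(x)=0$ for every $m$. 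The interchange of summation with the improper $\omega$-integration is the step that genuinely requires care and is again controlled by the convergence estimates above.

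With all $\alpha_n\equiv0$, the passage to \eqref{sequence of equations for sigma} is purely algebraic: writing $\sigma_n=x^n\beta_n$ turns the even relations into \eqref{sequence of equations for sigmas 2n} and the odd ones into its stated analogue, and the two families merge into the single nonhomogeneous Schr\"{o}dinger equation $\sigma_n''-q\sigma_n=h_n$ whose right-hand side is built from $\sigma_{n-2}$. The prescribed initial conditions follow from the continuity of each $\beta_n$ together with $\beta_n(0)=0$, which is immediate from the direct definition \eqref{beta direct definition} because $\sum_{k=0}^n l_{k,n}=P_n(1)=1$; indeed $\sigma_n(0)=\lim_{x\to0}x^n\beta_n(x)=0$ and $\sigma_n'(0)=\lim_{x\to0}x^{n-1}\beta_n(x)=0$ for every $n\ge1$. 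The starting data $\beta_{-1}=1/2$ and $\beta_0=(f-1)/2$ are the conventions fixed above.

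Finally, for the coefficients $\gamma_n$ I would compare the two available expressions for the derivatives, namely \eqref{c prime (omega)} and \eqref{s prime (omega)} against the termwise $x$-derivatives of \eqref{c(omega,x) via bessel} and \eqref{s(omega,x) via bessel}, matching the coefficients of equal Bessel functions exactly as before to obtain \eqref{sequence of equations for gamma}; substituting $\tau_n=x^n\gamma_n$ and $\sigma_n=x^n\beta_n$ then yields \eqref{sequence of equations for tau}, while the first functions $\gamma_{-1}=\frac14\int_0^x q(s)\,ds$ and $\gamma_0=\frac{f'-h}2-\frac14\int_0^x q(s)\,ds$ come from \eqref{gamma n} at $n=0$ and the stated convention for $\gamma_{-1}$. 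I expect the real difficulty to lie entirely in the two analytic justifications --- the termwise double differentiation and the term-by-term $\omega$-integration against Bessel functions --- rather than in the algebra, which is mechanical.
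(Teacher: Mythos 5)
Your overall route is exactly the paper's own: formal substitution of the Bessel series into \eqref{SL_omega}, reduction via \eqref{bessel1} and \eqref{recusrive Bessel} to $\sum_{n\ge1}\alpha_n(x)j_{2n-1}(\omega x)=0$, the orthogonality integral \eqref{orthogonal Bessels} to conclude $\alpha_n\equiv0$, the substitution $\sigma_n=x^n\beta_n$, and the analogous comparison of \eqref{c prime (omega)}, \eqref{s prime (omega)} with the differentiated series for the $\gamma_n$. However, the step you yourself single out as the crux --- justifying the termwise double differentiation --- is carried out with the wrong tool, and as proposed it fails. Differentiating \eqref{c(omega,x) via bessel} termwise twice produces series whose coefficients involve $\beta_{2n}'$ and $\beta_{2n}''$, so uniform convergence of the differentiated series requires decay in $n$ of these \emph{derivatives} of the coefficients. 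The estimates \eqref{EstimateK ParSum} and \eqref{EstK1 Uniform} give no such information: \eqref{EstimateK ParSum} bounds the remainder $K-K_N$, and \eqref{EstK1 Uniform} bounds $K_1-K_{1,N}$, whose expansion coefficients are the $\gamma_n$, which are \emph{not} the derivatives $\beta_n'$ (compare \eqref{sequence of equations for gamma}); neither estimate says anything at all about $\beta_n''$. The paper's actual mechanism is different: by \eqref{beta FL}, $\beta_n(x)$, $\beta_n'(x)$ and $\beta_n''(x)$ are the Fourier--Legendre coefficients in $z$ of $xK(x,xz)$, of $K(x,xz)+xK_1(x,xz)+xzK_2(x,xz)$, and of a corresponding second-derivative expression, which for $q\in C^2[0,b]$ are of class $C^{(3)}$, $C^{(2)}$ and $C^{(1)}$ respectively; Jackson's theorem then gives $|\beta_n|\le c_3n^{-5/2}$, $|\beta_n'|\le c_2n^{-3/2}$, $|\beta_n''|\le c_1n^{-1/2}$, and the criterion derived from $\sum_{n=0}^\infty(2n+1)j_n^2(z)=1$ and the Cauchy--Schwarz inequality (uniform convergence of $\sum_n a_n(x)j_{2n+\delta}(\omega x)$ whenever $\sum_n a_n^2(x)/n$ converges uniformly) closes the argument. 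Without coefficient bounds of this type your differentiation step has no support.

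A second, smaller defect is the passage to general continuous $q$. Passing to the limit ``in the final recurrences'', if that means the differential relations \eqref{sequence of equations for sigma}, would require convergence of the second derivatives $\sigma_n''$ along the approximating sequence $q_m\to q$, which you have not established and which is precisely what is unavailable. The paper avoids this by first integrating: it solves the recurrent equations to obtain the closed integral formulas \eqref{sigma n} and \eqref{tau n}, passes to the limit there (only uniform convergence of $f$ and of the coefficients is needed), and then recovers the Proposition by differentiating the limiting identities. Your treatment of the remaining points --- the orthogonality argument with positivity of the diagonal integral, the algebra leading to \eqref{sequence of equations for sigma} and \eqref{sequence of equations for tau}, and the initial conditions $\sigma_n(0)=\sigma_n'(0)=0$ deduced from $\beta_n(0)=0$ --- agrees with the paper.
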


\begin{remark}
The values $\sigma_1(x)=\frac 32(\varphi_1(x)-x)$ and $\tau_1(x)=\frac 32\left(\frac{f'\varphi_1+1}{f} -1-\frac x2 \int_0^x q(s)\,ds\right)$ can also be used as the initial values for Proposition \ref{Prop Beta and Gamma recurrent}.
\end{remark}

\begin{remark}
Let $L:=\partial^2-q(x)$. Equations \eqref{sequence of equations for sigma} and \eqref{sequence of equations for gamma} can be written in the following somewhat more symmetric form
\begin{align*}
    \frac{1}{x^{n}}L\left[x^n \beta_n(x)\right] &= \frac{2n+1}{2n-3} x^{n-1} L\left[\frac{\beta_{n-2}(x)}{x^{n-1}}\right],\\
    \gamma_n(x)-\frac 1{x^n}\left(x^n\beta_n(x)\right)' &= \frac{2n+1}{2n-3}\left[\gamma_{n-2}(x)-x^{n-1}\left(\frac{\beta_{n-2}(x)}{x^{n-1}}\right)'\right].
\end{align*}
\end{remark}

The solution of the inhomogeneous Schrödinger equations
\eqref{NonHom Schrodinger} with the initial conditions $\sigma_{n}%
(0)=\sigma_{n}^{\prime}(0)=0$ can be taken in the form (c.f.,
\cite{KrPorter2010})
\[
\sigma_{n}(x)=f(x)\int_{0}^{x}\left(  \frac{1}{f^{2}(s)}\int_{0}^{s}%
f(t)h_{n}(t)dt\right)  ds.
\]
Substituting the right-hand side from equation
\eqref{sequence of equations for sigma} and performing several integrations by
parts to get rid of the derivatives of the function $\sigma_{n-2}$ under the
integral signs we obtain the following recurrent formulas for the functions
$\sigma_{n}$ and $\tau_{n}$.
\begin{align}
\eta_{n}(x) &  =\int_{0}^{x}\bigl(tf^{\prime}(t)+(n-1)f(t)\bigr)\sigma
_{n-2}(t)\,dt,\quad\quad\theta_{n}(x)=\int_{0}^{x}\frac{1}{f^{2}(t)}%
\bigl(\eta_{n}(t)-tf(t)\sigma_{n-2}(t)\bigr)dt,\nonumber\\
\sigma_{n}(x) &  =\frac{2n+1}{2n-3}\left[  x^{2}\sigma_{n-2}%
(x)+c_n f(x)\theta_{n}(x)\right]  ,\label{sigma n}\\
\tau_{n}(x) &  =\frac{2n+1}{2n-3}\left[  x^{2}\tau_{n-2}(x)+
c_n\left(f^{\prime}(x)\theta_{n}(x)+\frac{\eta_{n}(x)}{f(x)}\right)-(c_n-2n+1)x\sigma_{n-2}(x)
\right]  ,\quad n=1,2,\ldots\label{tau n}%
\end{align}
where $c_{n}=1$ if $n=1$ and $c_n=2(2n-1)$ otherwise.

Now we explain why the series \eqref{c(omega,x) via bessel} and
\eqref{s(omega,x) via bessel} can be differentiated termwise. Suppose that
$q\in C^{2}[0,b]$. First, it follows from the equality $\sum_{n=0}^{\infty
}(2n+1)j_{n}^{2}(z)=1$ (\cite[10.1.50]{Abramowitz}) and Cauchy-Schwarz
inequality that a series $\sum_{n=0}^{\infty}a_{n}(x)j_{2n+\delta}(\omega x)$
(where $\delta$ is zero or one) is uniformly convergent provided that the
series $\sum_{n=0}^{\infty}\frac{a_{n}^{2}(x)}{n}$ is uniformly convergent.
Second, it follows from \cite[Corollary I to Theorem XIV]{Jackson} and
\cite{WangXiang} that for a function $g\in C^{(p+1)}[-1,1]$ its
Fourier-Legendre coefficients $a_{n}(g)$ satisfy
\[
|a_{n}(g)|\leq\frac{c_{p}V}{n^{p+1/2}},
\]
where $c_{p}$ is a universal constant and $V=\max_{[-1,1]}|g^{(p+1)}(x)|$.

Consider coefficients $\beta_{n}$. As can be seen from
\eqref{beta direct definition}, $\beta_{n}\in C^{2}[0,b]$ (at $x=0$ we define
$\beta_{n}$ by continuity). Moreover, $\beta_{n}(x)$ are the Fourier-Legendre coefficients of the function
$xK(x,xz)\in C^{(3)}[-1,1]$ (with respect to $z$, see \eqref{beta FL}). Hence $|\beta_{n}(x)|\leq
c_{3}n^{-5/2}$. For the derivatives we have
\[
\beta_{n}^{\prime}(x)=\frac{2n+1}{2}\int_{-1}^{1}\bigl(K(x,xz)+xK_{1}%
(x,xz)+xzK_{2}(x,xz)\bigr)P_{n}(z)\,dz,
\]
i.e., $\beta_{n}^{\prime}(x)$ are the Fourier-Legendre coefficients of the
function $K(x,xz)+xK_{1}(x,xz)+xzK_{2}(x,xz)\in C^{(2)}[-1,1]$. Hence
$|\beta_{n}^{\prime}(x)|\leq c_{2}n^{-3/2}$. Similarly, $|\beta_{n}%
^{\prime\prime}(x)|\leq c_{1}n^{-1/2}$, providing the uniform convergence of
all series involved in this section.

The validity of the formulas \eqref{sigma n} and \eqref{tau n} in the general
case can be verified by taking a sequence $q_{n}\in C^{2}[0,b]$ such that
$q_{n}\to q$ uniformly as $n\to\infty$, constructing corresponding integral
kernels and coefficients $\beta_{n}$, $\gamma_{n}$ for each $q_{n}$ and
passing to the limit in the formulas \eqref{sigma n} and \eqref{tau n}. The
validity of Proposition \ref{Prop Beta and Gamma recurrent} now follows by
differentiating \eqref{sigma n} and \eqref{tau n}.

\section{Numerical solution of spectral problems}

\label{Sect Num Exp}

The representations for solutions and their derivatives
(\ref{c(omega,x) via bessel}), (\ref{s(omega,x) via bessel}) and
(\ref{c prime (omega)}), (\ref{s prime (omega)}) lend themselves for numerical
solving of equation (\ref{SL_omega}) and in particular for numerical solving
of related spectral problems. As an example, let us consider the
Sturm-Liouville problem for (\ref{SL_omega}),
\begin{align}
\alpha_{0}y(0)+\mu_{0}y^{\prime}(0)  &  =0,\label{SLBC0}\\
\alpha_{b}y(b)+\mu_{b}y^{\prime}(b)  &  =0, \label{SLBCb}%
\end{align}
where we allow the coefficients $\alpha_{0}$, $\mu_{0}$, $\alpha_{b}$ and
$\mu_{b}$ to be not only constants but also entire functions of the square
root $\omega$ of the spectral parameter $\lambda$ satisfying $|\alpha
_{0}|+|\mu_{0}|\neq0$ and $|\alpha_{b}|+|\mu_{b}|\neq0$ (for every $\lambda$).

Based on the results of the previous sections and taking into account that the solutions $c(\omega, x)$ and $s(\omega, x)$ satisfy the following initial conditions
\begin{align*}
    c(\omega, 0) &= 1, & s(\omega,0) &= 0,\\
    c'(\omega, 0) &= h, & s'(\omega,0) &= \omega,
\end{align*}
we can formulate the following
algorithm for solving spectral problems \eqref{SLBC0}--\eqref{SLBCb} for
equation (\ref{SL_omega}).

\begin{enumerate}
\item Find a non-vanishing on $[0,b]$ solution $f$ of the equation \eqref{SLhom}.
Let $f$ be normalized as $f(0)=1$ and define $h:=f^{\prime}(0)$. The solution
$f$ can be constructed using the SPPS representation, see, e.g.,
\cite{KrPorter2010} for details or using any other numerical method.


\item Compute the functions $\beta_{k}$ and $\gamma_{k}$, $k=0,\ldots,N$ using
\eqref{sigma n} and \eqref{tau n}.

\item Calculate the approximations $c_{N}(\omega,x)$ and $s_{N}(\omega,x)$ of
the solutions $c(\omega,x)$ and $s(\omega,x)$ by (\ref{cN}) and (\ref{sN}). If
necessary, calculate the approximations of the derivatives of the solutions
using (\ref{cN prime}) and (\ref{sN prime}).

\item The eigenvalues of the problem (\ref{SL_omega}), (\ref{SLBC0}),
\eqref{SLBCb} coincide with the squares of the zeros of the entire function
\begin{equation}
\Phi(\omega):=\alpha_{b}\left(\mu_{0}c(\omega,b)-(\alpha_{0}+\mu_{0}%
h)\frac{s(\omega,b)}\omega\right)+\mu_{b}\left(\mu_{0}c^{\prime}(\omega,b)-(\alpha_{0}%
+\mu_{0}h)\frac{s^{\prime}(\omega,b)}\omega\right) \label{SLCharEq}%
\end{equation}
and are approximated by squares of zeros of the function
\begin{equation}
\Phi_{N}(\omega):=\alpha_{b}\left(\mu_{0}c_{N}(\omega,b)-(\alpha_{0}+\mu
_{0}h)\frac{s_{N}(\omega,b)}\omega\right)+\mu_{b}\left(\mu_{0}\overset{\circ}{c}_{N}%
(\omega,b)-(\alpha_{0}+\mu_{0}h)\frac{\overset{\circ}{s}_{N}(\omega,b)}\omega\right).
\label{SLCharEqApprox}%
\end{equation}

\item The eigenfunction $y_{\lambda}$ corresponding to the eigenvalue
$\lambda=\omega^{2}$ can be taken in the form
\begin{equation}
y_{\lambda}=\mu_{0}c(\omega,x)-(\alpha_{0}+\mu_{0}h)\frac{s(\omega,x)}\omega.
\label{SLEigenfunction}%
\end{equation}
Hence once the eigenvalues are calculated the computation of the corresponding
eigenfunctions can be done using formulas \eqref{cN} and \eqref{sN}.
\end{enumerate}

We have applied the proposed algorithm both in machine precision (in Matlab 2012) and in arbitrary precision arithmetics (in Mathematica 8.0). We refer the reader to \cite[Section 7]{KT AnalyticApprox} for some
implementation details concerning the computation of the system $\varphi_k$ and related numerical integration aspects. Though, as compared to \cite{KT AnalyticApprox}, for the arbitrary precision arithmetics computation we used the modification of Clenshaw-Curtis integration method by Filippi \cite{Filippi} (see also \cite[Section 6.4]{Rabinowitz} and \cite{Trefethen}) to calculate the functions $\varphi_k$, $\psi_k$, $\sigma_k$ and $\tau_k$. This method is reportedly more accurate for computing indefinite integrals, and we illustrate its performance in Example \ref{Example Exp}. Note that this method can be efficiently realized via Fast Fourier transform and for the improved accuracy the last coefficient $a_N$ in \cite[(6.4.8) and (6.4.10)]{Rabinowitz} has to be halved, c.f., \cite[(2.13.1.10) and (2.13.1.11)]{Rabinowitz}.
For the machine precision calculations we used Newton-Cottes 6 point integration rule.

The independent evaluation of the spherical Bessel functions $j_k(\omega b)$ for all values of $k$ and all values of $\omega b$ using built-in routines from Matlab or Wolfram Mathematica can be rather slow. In order to speed up the evaluation of the series \eqref{cN}, \eqref{sN}, \eqref{cN prime} and \eqref{sN prime} the recurrent relations \eqref{recusrive Bessel} can be used. We refer the reader to \cite{Barnett}, \cite{GillmanFiebig} and references therein for further details.

Some numerical methods (such as Newton's method) can benefit from the knowledge of the $\omega$-derivative of the solutions and their derivatives. The representations for solutions and their derivatives
(\ref{c(omega,x) via bessel}), (\ref{s(omega,x) via bessel}) and
(\ref{c prime (omega)}), (\ref{s prime (omega)}) can be easily differentiated with respect to $\omega$. For example, for the solutions $c(\omega,x)$ and $s(\omega, x)$ one obtains
\begin{align*}
    c'_\omega(\omega, x) &= - x\sin \omega x + 2\sum_{n=0}^\infty (-1)^n \beta_{2n}(x) \left(\frac {2n}\omega j_{2n}(\omega x) - x j_{2n+1}(\omega x)\right),\\
    s'_\omega(\omega, x) &=  x\cos \omega x + 2\sum_{n=0}^\infty (-1)^n \beta_{2n+1}(x) \left( x j_{2n}(\omega x) - \frac {2n+2}\omega j_{2n+1}(\omega x)\right).
\end{align*}
The partial sums of the obtained series can be used as approximations to the $\omega$-derivatives. Convergence estimates of the partial sums can be obtained similarly to the previous sections.

Note that the coefficients $\{\beta_{j}\}$ and
$\{\gamma_{j}\}$ as Fourier coefficients of smooth functions decrease to zero
(not necessarily monotonically) as $j\to\infty$. The formulas presented in
Definition \ref{Def beta} as well as \eqref{beta direct definition} and
\eqref{gamma n via K1} involve the dependence on all preceding functions and hence present computational difficulties due to the limited computation precision and cancelation of comparable terms. In Examples
\ref{Example Exp}--\ref{Example GelfLev} we illustrate this and show that the alternative formulas introduced in Section \ref{Sect Sequence Equations} allow one to compute more coefficients $\beta_k$ and $\gamma_k$.
The following observation can be used to
estimate an optimal number $N$ to choose. 

\begin{remark}
\label{Rem Precision Control} The boundary conditions
\eqref{GoursatCond for K} offer a simple and efficient way for controlling the
accuracy of the numerical method. Indeed, substitution of \eqref{K via beta}
into \eqref{GoursatCond for K} leads to the equalities
\[
\sum_{j=0}^{\infty} \frac{\beta_{j}(x)}{x}=\frac{h}{2}+\frac{1}{2}\int_{0}%
^{x}q(s)\,ds\quad\text{and}\quad\sum_{j=0}^{\infty} \left(  -1\right)
^{j}\frac{\beta_{j}(x)}{x}=\frac{h}{2}
\]
(due to the relations $P_{j}\left(  1\right)  =1$ and $P_{j}\left(  -1\right)
=\left(  -1\right)  ^{j}$). The differences
\begin{equation}\label{ErrorCheck}
\varepsilon_{1,N}(x):=\biggl\vert\sum_{j=0}^{N} \frac{\beta_{j}(x)}{x}-\left(
\frac{h}{2}+\frac{1}{2}\int_{0}^{x} q(s)\,ds\right)  \biggr\vert \quad
\text{and}\quad\varepsilon_{2,N} (x):=\biggl\vert\sum_{j=0}^{N} \left(
-1\right)  ^{j}\frac{\beta_{j}(x)}{x}-\frac{h}{2}\biggr\vert
\end{equation}
indicate the accuracy of the approximation of the transmutation kernel and
hence the accuracy of the approximate solutions \eqref{cN} and \eqref{sN}.

Similarly, the accuracy of the coefficients $\gamma_k$ and the approximations \eqref{cN prime} and \eqref{sN prime} can be estimated using \eqref{K1 via gammas} and the following relations \cite{KT FuncProp},
\[
K_1(x,x)=\frac 14\biggl(q(x) + h\int_0^x q(s)\,ds+\frac 12\biggl(\int_0^x q(s)\,ds\biggr)^2\biggr), \qquad K_1(x,-x) = \frac14\biggl(q(0) + \int_0^x q(s)\,ds\biggr).
\]
\end{remark}

The results of the previous section allow us to prove the uniform error bound
for all approximate zeros of the characteristic function (at least when the
coefficients in the boundary conditions \eqref{SLBC0} and \eqref{SLBCb} are
independent of the spectral parameter) obtained by the proposed algorithm and
that neither spurious zeros appear nor zeros are missed. For the proof we refer to \cite[Section 7]{KT AnalyticApprox}.

The proposed algorithm is based on the exact analytical representation of the solutions \eqref{c(omega,x) via bessel}, \eqref{s(omega,x) via bessel} and their derivatives \eqref{c prime (omega)}, \eqref{s prime (omega)}. It can be easily combined with the widely used techniques such as the interval subdivision and the shooting method \cite{PryceBook}. However we decided to perform the numerical experiments globally without any interval subdivision, to illustrate that even applied directly the algorithm provides accurate eigendata.

\begin{example}
\label{Example Exp} Consider the following spectral problem (the first Paine
problem, \cite{Paine}, see also \cite[Example 7.4]{KT AnalyticApprox})
\[%
\begin{cases}
-u^{\prime\prime}+e^{x}u=\lambda u, \quad 0\leq x\leq\pi,\\
u(0,\lambda)=u(\pi,\lambda)=0.
\end{cases}
\]
\begin{figure}[htb!]
\centering
\includegraphics[bb=97 308 513 482, width=5.7778in,height=2.4167in]
{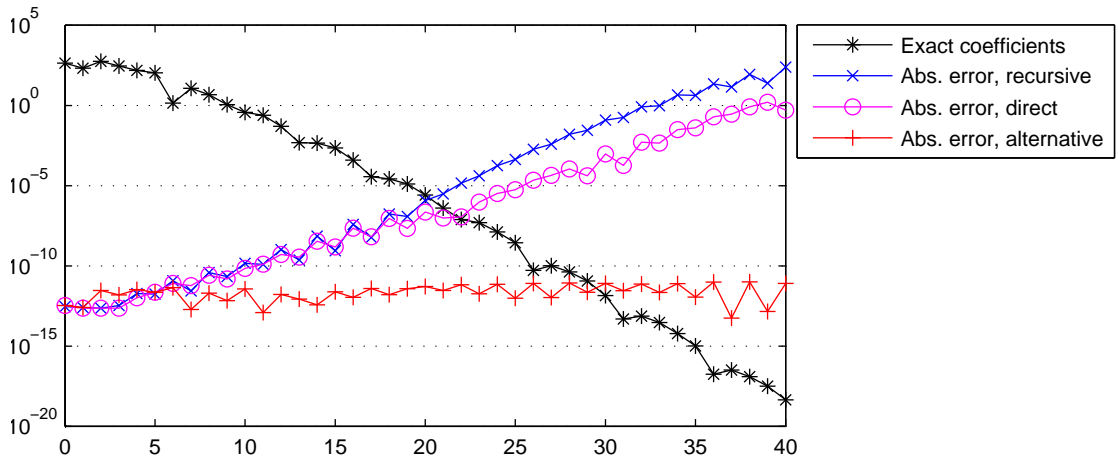}\caption{The plot of the absolute values of the coefficients
$\beta_{k}(\pi)$, $k\le40$ from Example \ref{Example Exp} (black line with
asterisks) together with absolute errors obtained using formulas from
Definition \ref{Def beta} (blue line with `x' marks), formulas
\eqref{beta direct definition} (magenta line with `o' marks) and formulas
\eqref{sigma n} (red line with `+' marks).}%
\label{Ex1Fig1}%
\end{figure}

\begin{figure}[htb!]
\centering
\includegraphics[bb=198 301 414 489, width=3in,height=2.6in]
{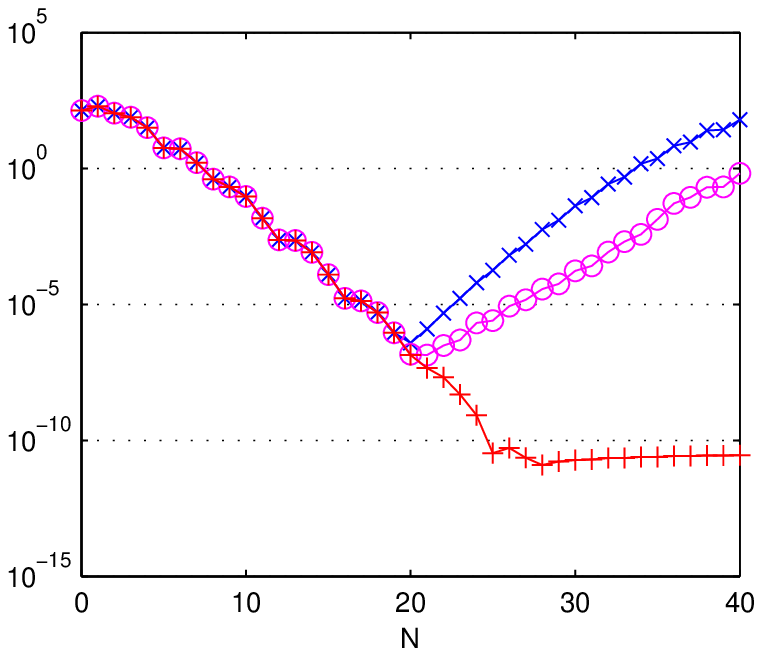}\ \ \  \includegraphics[bb=198 301 414 489, width=3in,height=2.6in]
{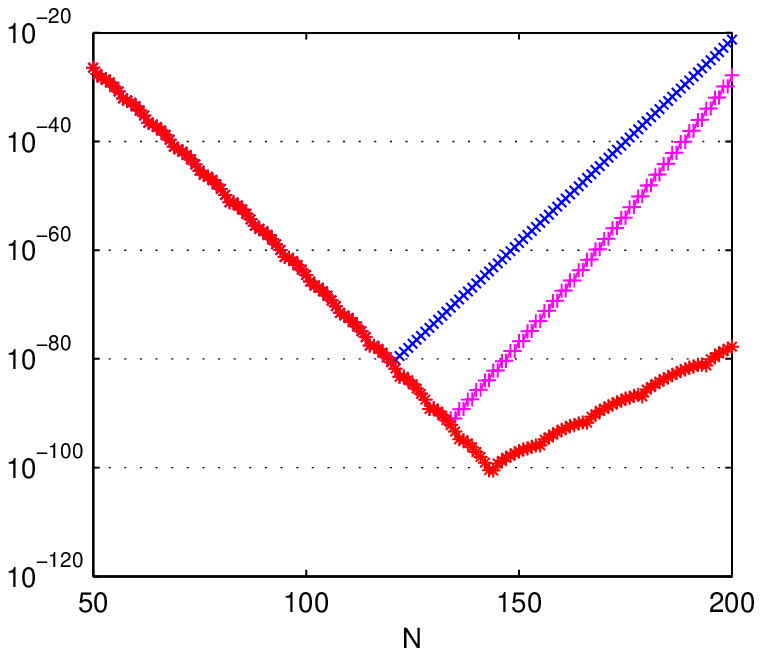}
\caption{The plot of the maximum of the differences \eqref{ErrorCheck} at the point $x=\pi$ from Example \ref{Example Exp}. The left graph corresponds to computation in the machine precision, the coefficients $\beta_k$, $k\le 40$ obtained using formulas from
Definition \ref{Def beta} (blue line with `x' marks), formulas
\eqref{beta direct definition} (magenta line with `o' marks) and formulas
\eqref{sigma n} (red line with `+' marks). The right graph corresponds to computation in high precision arithmetics, the coefficients $\beta_k$, $k\le 200$ obtained using formulas from
Definition \ref{Def beta} (blue line with `x' marks) and formulas
\eqref{sigma n} (magenta line with '+' marks computed with the use of Clenshaw-Curtis integration method, red line with `+' marks computed  with Filippi's modification).}
\label{Ex1Fig2}
\end{figure}

\begin{figure}[htb!]
\centering
\includegraphics[bb=126 316 486 475, width=5in,height=2.2in]
{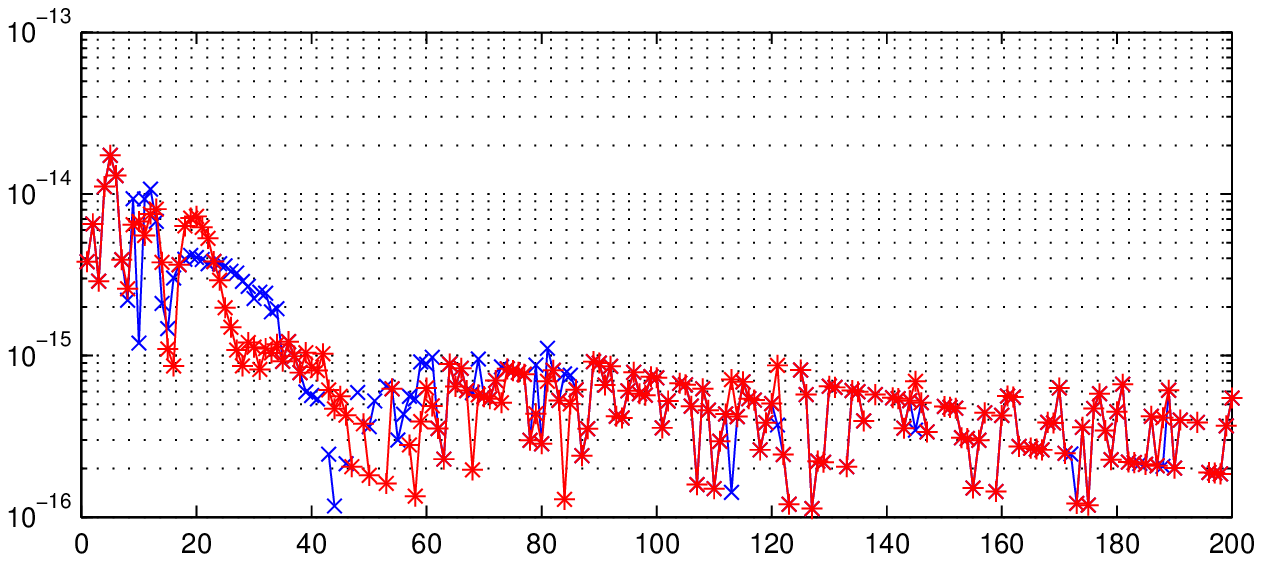}\\
\medskip
\includegraphics[bb=126 316 486 475, width=5in,height=2.2in]{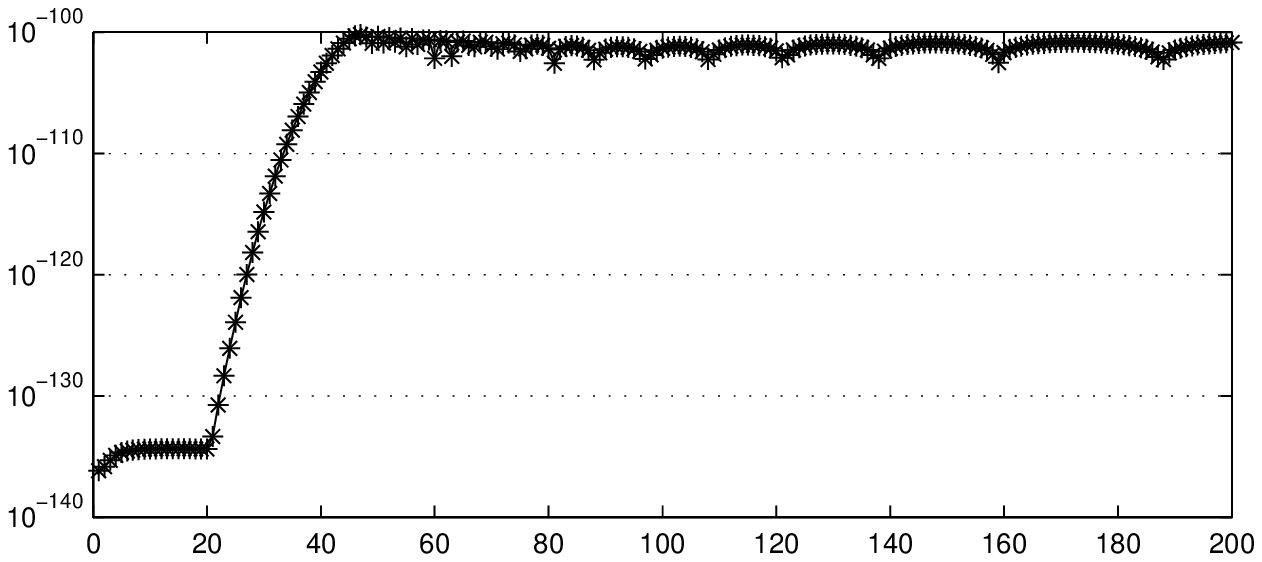}
\caption{Errors of the first 200 eigenvalues from Example \ref{Example Exp}. Top plot: relative errors,
obtained using machine precision and $N=29$ (blue line with `x' marks) and $N=40$ (red line with `*' marks). Bottom plot: absolute errors, obtained using high precision and $N=144$.}
\label{Ex1Fig3}
\end{figure}

For the Matlab program we computed functions $\beta_k$, $k\le 40$. All functions were represented by their values in 20001 uniformly spaced points. The modified Newton-Cottes 6 point integration rule was used to compute all integrals involved. In Mathematica we computed $\beta_k$ for $k\le 200$ using 200 digit arithmetics and representing all functions by their 257 values at  Tchebyshev-spaced points, the Filippi modification of Clenshaw-Curtis formula was used for the numerical integration. As a particular solution we took $f(x)=I_0(2e^{x/2})$, however we did not use the explicit formula computing instead this particular solution numerically from the SPPS representation. Once again we would like to emphasize the excellent performance of the SPPS representation, the calculated particular solution coincided with the one provided by the exact formula up to Mathematica's 200 digit accuracy.

On Figure \ref{Ex1Fig1} we present the absolute errors of the coefficients $\beta_k$ at $x=\pi$ computed in the machine precision using formulas from Definition \ref{Def beta}, \eqref{beta direct definition} and \eqref{sigma n}. As the exact values, the coefficients evaluated in Mathematica were used. As one can see, formula \eqref{sigma n} performed much better, coefficient errors remain of essentially the same order while two other formulas produce exponential error growth.

As we mentioned before, formula \eqref{ErrorCheck} from Remark \ref{Rem Precision Control} can be used to estimate the number $N$ of coefficients $\beta_k$ computed correctly. On Figure \ref{Ex1Fig2} we present the maximum of two differences from \eqref{ErrorCheck} evaluated at $x=\pi$ both in the machine precision and in the arbitrary precision arithmetics. The minimums on the first plot correlates with Figure \ref{Ex1Fig1}. Again, a better performance of the formula \eqref{sigma n} can be appreciated.
For the arbitrary precision arithmetics, the minimum at $N=144$ can be clearly seen and one can appreciate the better performance of the Filippi integration method in comparison with the Clenshaw-Curtis' one.  For the machine precision, the graph almost stabilizes at $N=29$.

On Figure \ref{Ex1Fig3} we present the errors of the computed eigenvalues. For the machine precision we have taken $N=29$ and $N=40$ to illustrate that the proposed method is not sensible to the value of $N$ while one chooses $N$ from the stabilized part of the differences \eqref{ErrorCheck}. As one can appreciate, the relative errors are close to the machine precision limit. For the arbitrary precision arithmetics we have taken $N=144$ (an optimal number determined from Figure \ref{Ex1Fig2}). As one can see, the eigenvalue errors remain uniformly bounded. The better precision of the first eigenvalues is explained by Proposition \ref{Prop Exp Conv}.


The computation time on a PC equipped with Intel i7-3770 microprocessor was:  in machine precision -- 0.25 seconds for constructing a particular solution and the coefficients $\beta_k$, $k\le 40$ and 0.63 seconds for finding 500 eigenvalues with the help of secant method; in arbitrary precision arithmetics a particular solution and the coefficients $\beta_k$, $k\le 200$ were computed in 9 seconds, 13 seconds more were necessary for our code based on Mathematica's function \texttt{FindRoot} and recurrent relation \eqref{recusrive Bessel} to find 500 eigenvalues. The maximum relative error of the first 500 eigenvalues computed in the machine precision was $2\cdot 10^{-14}$, the maximum absolute error of the first 500 eigenvalues computed in the high precision was $6.3\cdot 10^{-101}$.
\end{example}

\begin{example}\label{Example Paine}
Consider the
following spectral problem (the second Paine problem, \cite{Paine}, see also \cite[Example 7.5]{KT AnalyticApprox})
\begin{equation*}
\begin{cases}
-u''+\frac{1}{(x+0.1)^2} u=\lambda u, \quad 0\le x\le \pi,\\
u(0,\lambda)= u(\pi,\lambda)=0.
\end{cases}
\end{equation*}
\begin{figure}[htb!]
\centering
\includegraphics[bb=126 309 486 482, width=5in,height=2.4in]
{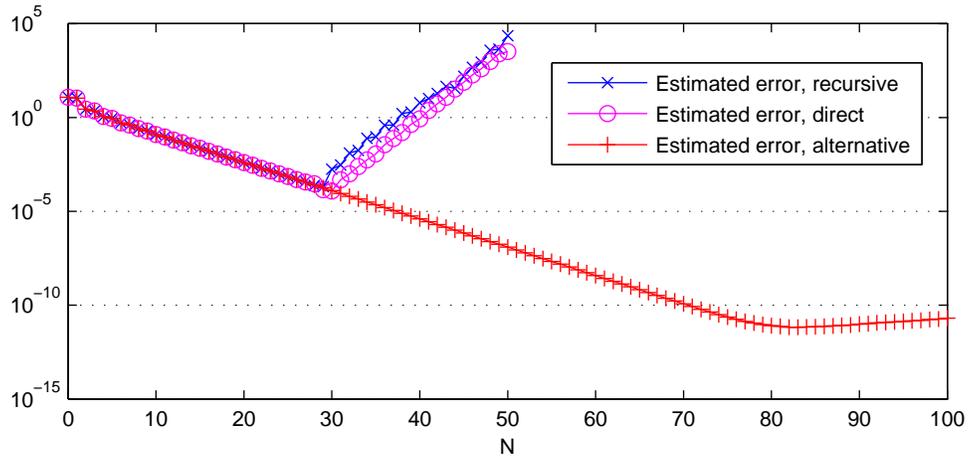}\caption{The plot of the maximum of the differences \eqref{ErrorCheck} at the point
$x=\pi$ from Example \ref{Example Paine}. The coefficients $\beta_k$, $k\le 100$ obtained using formulas from
Definition \ref{Def beta} (blue line with `x' marks), formulas
\eqref{beta direct definition} (magenta line with `o' marks) and formulas
\eqref{sigma n} (red line with `+' marks).}
\label{Ex2Fig1}
\end{figure}

\begin{figure}[htb!]
\centering
\includegraphics[bb=126 316 486 475, width=5in,height=2.2in]
{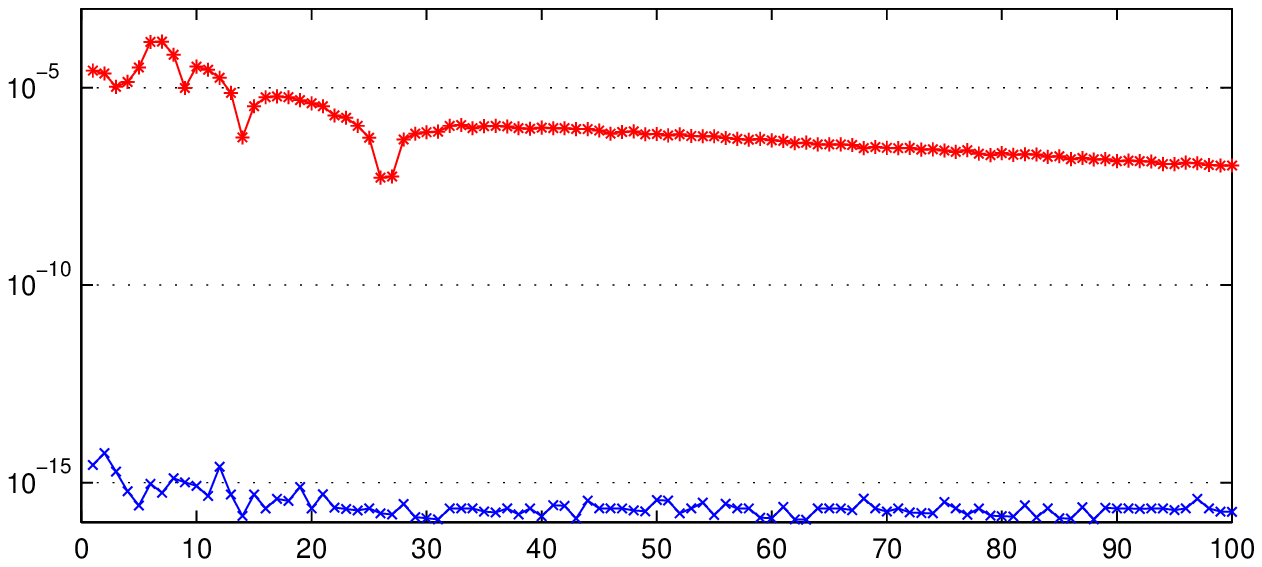}
\caption{Relative errors of the first 100 eigenvalues from Example \ref{Example Paine}. Upper red line with `*' marks: the best results we were able to obtain using machine precision and the method from \cite{KT AnalyticApprox}, lower blue line with `x' marks: results obtained by the proposed method, $N=83$.}
\label{Ex2Fig2}
\end{figure}

This problem was considered in \cite[Example 7.5]{KT AnalyticApprox}, where the results were reported for high precision arithmetics only. The reason was in the largeness of the coefficients arising in the solution of the approximation problem limiting the achievable accuracy of the approximation. As a result, the saturation occurred starting from $N=20$ and the eigenvalues were obtained with the error of about $0.001$ or worse, see Figure \ref{Ex2Fig2}.

The method proposed in this work allowed us to compute 84 coefficients $\beta_k$, see Figure \ref{Ex2Fig1}, and 500 eigenvalues were calculated. On Figure \ref{Ex2Fig2} we present relative errors of the first 100 eigenvalues. The maximum relative error was $5.6\cdot 10^{-15}$. The computation time was:  0.69 seconds for constructing a particular solution and the coefficients $\beta_k$, $k\le 100$, and 0.8 seconds for finding 500 eigenvalues with the help of the secant method.
\end{example}

\begin{example}\label{Example GelfLev}
Consider the
following spectral problem (the truncated Gelfand-Levitan potential,  \cite{Pryce})
\begin{gather*}
-u''+2\frac{T(x)\sin 2x+\cos^4 x}{T^2} u=\lambda u, \quad T(x)=1+\frac x2+\frac{\sin(2x)}4,\quad 0\le x\le 100,\\
u(0,\lambda)-u'(0,\lambda) = u(100,\lambda)=0.
\end{gather*}
The problem is considered difficult due to nonuniform oscillations of decreasing size in $q$.

\begin{figure}[htb!]
\centering
\includegraphics[bb=198 309 414 482, width=3in,height=2.4in]
{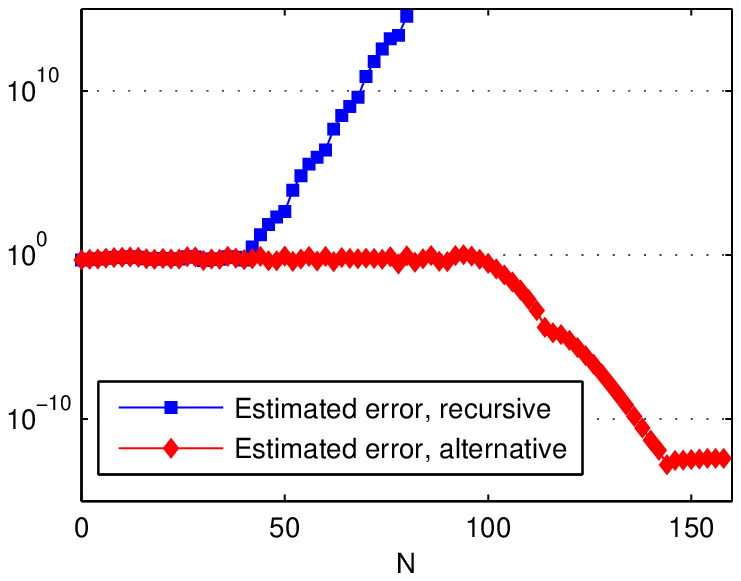}\ \ \
\includegraphics[bb=198 309 414 482, width=3in,height=2.4in]
{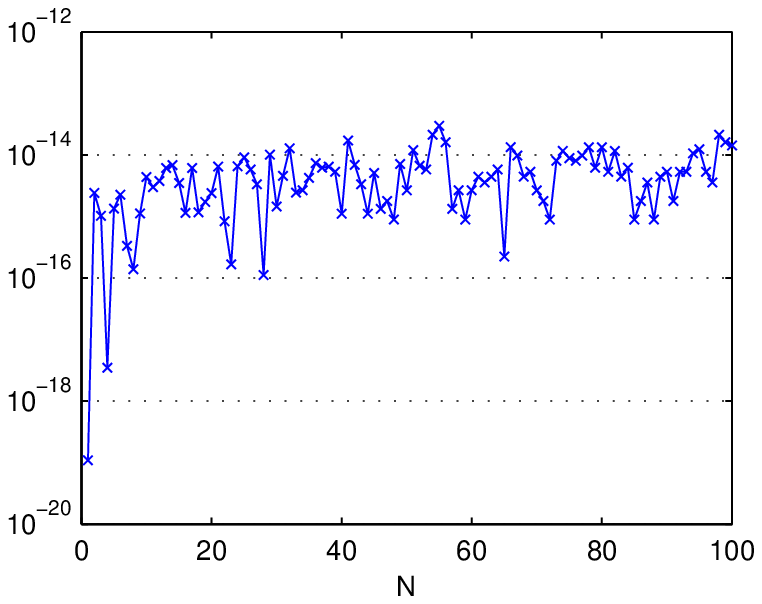}
\caption{Left figure: The plot of the maximum of the differences \eqref{ErrorCheck} at the point
$x=100$ from Example \ref{Example GelfLev}. The coefficients $\beta_k$, $k\le 160$ were obtained using formulas from
Definition \ref{Def beta} (blue line with square marks) and formulas
\eqref{sigma n} (red line with diamond marks). Right figure: errors of the first 100 eigenvalues in comparison with the \textsc{Matslise} package.}
\label{Ex3Fig1}
\end{figure}

We considered this problem in machine precision arithmetics, using 80001 points to represent all the functions involved. The particular solution was computed using the SPPS representation and was used to calculate the coefficients $\beta_k$, $k\le 160$. On Figure \ref{Ex3Fig1} (left plot) we present the maximum of the differences \eqref{ErrorCheck} at $x=100$. As one can see,  the differences do not decrease until $N=100$ and once again stabilize at $N=144$. The formulas from Definition \ref{Def beta} do not serve to compute that large number of the coefficients in the machine precision, while formulas \eqref{sigma n} provide such a possibility. We used $N=144$ to compute the approximate eigenvalues. Since the exact eigenvalues of the problem are not known, we compared our results to the values reported in \cite{Pryce} as well as to those produced by the \textsc{Matslise} package \cite{Ledoux}. In Table \ref{Ex3Tab1} we present several eigenvalues obtained. On Figure \ref{Ex3Fig1} (right plot) we show the differences between the eigenvalues computed by our method and by the \textsc{Matslise} package.

\begin{table}[htb]
\centering
\begin{tabular}{cccc}
\hline
$n$ & $\lambda_n$ (\cite{Pryce}) & $\lambda_n$ (our method) & $\lambda_n$ (\textsc{Matslise})\\
\hline
0 & 0.00024681157 & 0.000246811787231069 & 0.000246811787231069 \\
1 &  & 0.00222130735092850 & 0.00222130735093092 \\
2 &  & 0.00617030527111158 & 0.00617030527111055 \\
5 &  & 0.0298644887478121 & 0.0298644887478144 \\
10 &  & 0.108847814083180 & 0.108847814083183 \\
20 &  & 0.414974806699760 & 0.414974806699766 \\
50 &  & 2.51650713279491 & 2.51650713279492 \\
99 & 9.77082852816 & 9.77082852802586 & 9.77082852802587 \\
\hline
\end{tabular}\caption{Eigenvalues from Example \ref{Example GelfLev}.}
\label{Ex3Tab1}
\end{table}
\end{example}

\begin{example}\label{Example Sech}
Consider the following problem with spectral parameter depending boundary conditions
\begin{equation*}
\begin{cases}
-u''+Q(x) u=\lambda u, \qquad x\in [0,2a],\\
u'(0)-\nu u(0) = 0,\\
u'(2a)+\nu u(2a)=0,
\end{cases}
\end{equation*}
where $\lambda = -\nu^2$ and $Q(x)=-m(m+1)\sech^2(x-a)$, $m\in\mathbb{N}$. This spectral problem arises in relation with quantum wells when the sech-squared potential $q(x)=-m(m+1)\sech^2 x$, $x\in(-\infty, \infty)$ is truncated, see \cite{CKOR} and \cite[Sect. 7.4]{KT AnalyticApprox} for details. As was mentioned in \cite[Sect. 7.4]{KT AnalyticApprox}, the physically meaningful region for the eigenvalues of a quantum well problem is $\lambda \in \bigl[\min_{x\in[0,2a]} Q(x),0\bigr)$ and in the particular case under consideration, the non-truncated problem possesses exactly $m$ eigenvalues given by $\lambda_n=-(m-n)^2$, $n=0,1,\ldots,m-1$. In the notations of this work we have $\omega =i\nu$, the region to look for the eigenvalues is $\nu\in \bigl(0,\sqrt{m(m+1)}\bigr)$ and the characteristic function \eqref{SLCharEq} can be written in the form
\begin{equation*}
    c'(\omega,2a)-\frac{(i\omega+h)}w s'(\omega,2a)-i\omega c(\omega,2a)+i(i\omega+h)s(\omega,2a).
\end{equation*}
For the numerical experiment we chose $a=8$ and $m=3$ and $5$. All calculations were performed in Matlab in the machine precision. 50001 points (for $m=3$) and 80001 points (for $m=5$) were used to represent all the functions involved, and the alternative formulas \eqref{sigma n} and \eqref{tau n} were used to compute the coefficients $\beta_k$ and $\gamma_k$. For $m=3$ the optimal $N$ was found to be $51$, while for $m=5$ the optimal $N$ was $63$. The computed eigenvalues are presented in Table \ref{Ex4Tab1}.

\begin{table}[htb!]
\centering
\begin{tabular}{ccc}
$m=3$ & \quad & $m=5$\\
\begin{tabular}[t]
{cccc}\hline
$n$ & Exact $\lambda_{n}$ & $\lambda_{n}$ (our method) & $\lambda_{n}$ (\cite{CKOR})
\\\hline
0 & $-9$ & $ -9.00000001319$ & $-8.999628656$\\
1 & $-4$ & $ -3.99999999103$ & $-3.999998053$\\
2 & $-1$ & $ -1.00000000089$ & $-0.999927816$\\
\hline
\end{tabular} & &
\begin{tabular}[t]
{ccc}\hline
$n$ & Exact $\lambda_{n}$ & $\lambda_{n}$ (our method)
\\\hline
0 & $-25$ & $ -25.00003450$ \\
1 & $-16$ & $ -15.99994465$ \\
2 & $-9$ & $  -9.000027727$ \\
3 & $-4$ & $  -3.999995313$ \\
4 & $-1$ & $  -1.000000082$ \\
\hline
\end{tabular}
\end{tabular}
\caption{Approximations of $\lambda_n$ of the potential $-m(m+1)\sech^2 x$ (Example \ref{Example Sech}) for $m=3$ and $m=5$.}
\label{Ex4Tab1}
\end{table}
\end{example}

\begin{example}\label{Example Boumenir}
Consider the
following spectral problem with a complex valued potential \cite{Boumenir2001},
\begin{equation*}
\begin{cases}
-u''+(1+i)x^2 u=\lambda u, \quad 0\le x\le \pi,\\
u(0,\lambda) = u(\pi,\lambda)=0.
\end{cases}
\end{equation*}

\begin{figure}[htb!]
\centering
\includegraphics[bb=126 316 486 475, width=5in,height=2.2in]
{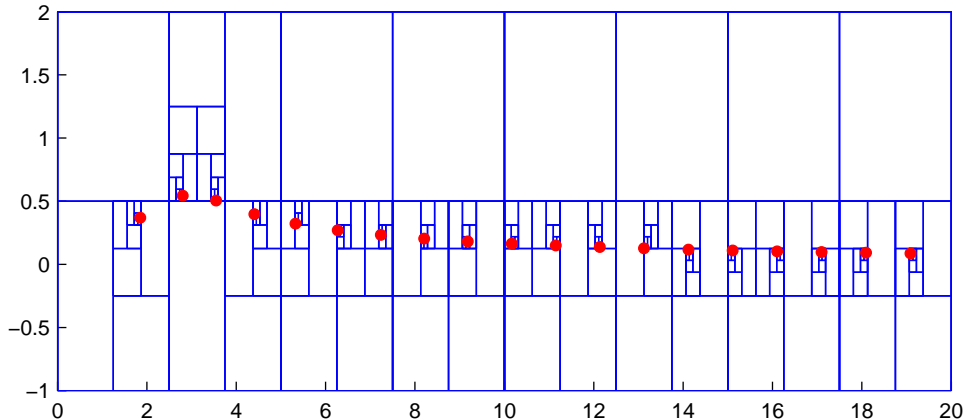}
\caption{Illustration to the work of the algorithm based on the argument principle in Example \ref{Example Boumenir}. Blue rectangles show the regions used to count the number of zeros on the subdivision step. Red circles mark the found eigenvalues $\omega_n=\sqrt{\lambda_n}$.}
\label{Ex5Fig1}
\end{figure}
We computed 40 eigenvalues of this problem in Matlab in the machine precision. All the functions involved were represented by their values in 4001 points. Alternative formulas for the coefficients $\beta_k$ were used and $N=27$ was find to be optimal for the machine precision arithmetics. The eigenvalues of the problem are complex numbers and they were localized using the argument principle, see \cite[Example 5.6]{KMoTo} for further details.
After the zeros were localized within rectangles with the sides smaller than 0.1, we applied several Newton iterations to obtain approximate eigenvalues. On Figure \ref{Ex5Fig1} we illustrate the work of the algorithm based on the argument principle. The approximate eigenvalues were compared with the values obtained from the exact characteristic equation using Wolfram Mathematica. We present several eigenvalues in Table \ref{Ex5Tab1}. The relative errors of the obtained eigenvalues were less than $9.5\cdot 10^{-15}$, while the relative errors of the eigenvalues reported in \cite{Boumenir2001} were between $4.9\cdot 10^{-6}$ and $5.8\cdot 10^{-4}$. Moreover, the absolute error of the $40$th eigenvalue was $2\times 10^{-12}$ (our method)
compared to $0.94$ (\cite{Boumenir2001}).

\begin{table}[htb!]
\centering
\small
\begin{tabular}{cccc}
\hline
$n$ & $\lambda_n$ (Exact) & $\lambda_n$ (our method) & $\lambda_n$ (\cite{Boumenir2001})\\
\hline
1 & $3.29252447095779+1.36633744750457i$ & $3.29252447095781 +      1.36633744750457i$ & $3.292530+1.366321i$ \\
2 & $7.55904717588980+3.05068659781596i$ & $7.55904717588983 +      3.05068659781599i$ & $7.559344+3.050506i$, \\
3 & $12.33985666951932+3.59139757785523i$ & $ 12.33985666951938 +      3.59139757785521i$ & $12.34084+3.59194i$, \\
5 & $28.26784723460268+3.43290953376002i$ & $28.26784723460276 +      3.43290953376014i$ & $28.26883+3.43560i$ \\
10 &$103.2845071723909+3.3276829117743i$ & $103.2845071723903 +      3.3276829117725i$ & $103.2855+3.3390i$ \\
20 &$403.2885933262633+3.2994144085675i$ & $403.2885933262637 +      3.2994144085679i$ & $403.2893+3.3512i$ \\
40 &$1603.289554053531+3.292259803191i$ & $1603.289554053531 +      3.292259803189i$ & $1603.139+4.214i$ \\
\hline
\end{tabular}\caption{Eigenvalues from Example \ref{Example Boumenir}.}
\label{Ex5Tab1}
\end{table}
\end{example}

\section{Conclusions}
The representations \eqref{c(omega,x) via bessel}, \eqref{s(omega,x) via bessel} of solutions to \eqref{SL_omega} are proved together with the
representations \eqref{c prime (omega)}, \eqref{s prime (omega)} of their derivatives. For the coefficients $\beta_{k}$ and
$\gamma_{k}$ in \eqref{c(omega,x) via bessel}, \eqref{s(omega,x) via bessel}, \eqref{c prime (omega)} and \eqref{s prime (omega)}  besides the closed form formulas \eqref{beta direct definition}, \eqref{gamma n} a recurrent integration procedure is developed. Estimates, uniform with respect to $\omega$, for the rate of convergence of the series involved in the representations are
obtained. It is shown that besides offering new analytical representations of
solutions, formulas \eqref{c(omega,x) via bessel}, \eqref{s(omega,x) via bessel}, \eqref{c prime (omega)} and \eqref{s prime (omega)} put at one's disposal a simple and powerful numerical
method for solving initial value and spectral problems for \eqref{SL_omega}. Due to the uniformity of the approximation with respect to $\omega$, the numerical method
based on \eqref{c(omega,x) via bessel}, \eqref{s(omega,x) via bessel}, \eqref{c prime (omega)} and \eqref{s prime (omega)} allows one to compute within seconds large sets of eigendata with
a nondeteriorating accuracy.

\end{document}